\newtheorem{theorem}{Theorem}[section]
\newtheorem{proposition}[theorem]{Proposition}
\newtheorem{lemma}[theorem]{Lemma}
\newtheorem{definition}[theorem]{Definition}
\theoremstyle{definition}
\newtheorem{remark}[theorem]{Remark}
\newtheorem{notation}[theorem]{Notation}
\newtheorem{example}[theorem]{Example}
\newcommand{\QQ}{\mathbb{Q}}
\newcommand{\PP}{\mathbb{P}}
\newcommand{\GG}{\mathbb{G}}
\newcommand{\OO}{\mathcal{O}}
\newcommand{\ZZ}{\mathbb{Z}}
\newcommand{\Kgnb}[1]{\overline{\mathcal{M}}_{#1}}
\newcommand{\Hh}{H_{\sigma_{1,1}}}
\newcommand{\HH}{H_{\sigma_{2}}}
\newcommand{\RR}{\mathbb{R}}
\newcommand{\ddeg}{D_{deg}}
\newcommand{\dunb}{D_{unb}}
\newcommand{\Ddeg}{\mathcal{D}_{deg}}
\newcommand{\Dunb}{\mathcal{D}_{unb}}
\newcommand{\ccc}{\mathcal{C}}
\newcommand{\qqq}{\mathcal{Q}}
\newcommand{\lll}{\mathcal{L}}
\newcommand{\Hilb}{\operatorname{Hilb}}
\begin{document}

\title{Stable Base Locus Decompositions of Kontsevich Moduli Spaces}

\author{Dawei Chen}

\address{University of Illinois at
  Chicago, Mathematics Department, Chicago, IL 60607}

\email{dwchen@math.uic.edu}

\author{Izzet Coskun}

\address{University of Illinois at
  Chicago, Mathematics Department, Chicago, IL 60607}

\email{coskun@math.uic.edu}

\subjclass[2000]{Primary: 14E05, 14E30, 14M15, 14D22}

\thanks{During the preparation of this article the second author was
  partially supported by the NSF grant DMS-0737581.}

\begin{abstract}
In this paper, we determine the stable base locus decomposition of the Kontsevich moduli spaces of degree two and three stable maps to Grassmannians. This gives new examples of the decomposition for varieties with Picard rank three. We also discuss the birational models that correspond to the chambers in the decomposition.
\end{abstract}

\maketitle
\tableofcontents

\section{Introduction}

The effective cone of a projective variety can be decomposed into chambers depending on the stable base locus of the corresponding linear series. This decomposition dictates the different birational models of the variety that arise while running the Minimal Model Program and has been studied in detail in \cite{ein:stable} and \cite{ein:restricted}. In general, especially when the dimension of the Neron-Severi space is three or more, it is very hard to compute the  decomposition. Consequently, there have been few explicit examples. In this paper,  we  completely determine the stable base locus decomposition of the Kontsevich moduli spaces $\Kgnb{0,0}(G(k,n),d)$ for $d=2$ or $3$.  Theorem \ref{de-two} contains the description of the decomposition for $\Kgnb{0,0}(G(k,n),2)$  and Theorems \ref{de-three} and \ref{de-three-line} contain the description for $\Kgnb{0,0}(G(k,n),3)$. We also describe in detail the birational models that correspond to each chamber in the decomposition.
\smallskip

Let $X$ be a complex, projective, homogeneous variety. Let $\beta$ be the homology class of a curve on $X$. Recall that the Kontsevich moduli space of $m$-pointed, genus-zero stable maps $\Kgnb{0,m}(X, \beta)$ is a compactification of the space of $m$-pointed rational curves  on $X$ with class $\beta$, parameterizing isomorphism classes of maps $(C,p_1, \dots, p_m, f)$ such that
\begin{enumerate}
\item The domain curve $C$ is a proper, connected, at-worst-nodal curve of arithmetic genus zero.
\item The marked points $p_1, \dots, p_m$ are smooth, distinct points on $C$.
\item $f_{*} [C] = \beta$ and any component of $C$ contracted by $f$ has at least three nodes or marked points.
\end{enumerate}
$\Kgnb{0,m}(X, \beta)$ is a smooth stack and the corresponding coarse moduli space is $\QQ$-factorial with finite quotient singularities. Furthermore, in $\operatorname{Pic}(\Kgnb{0,m}(G(k,n), d)) \otimes \QQ$ linear and numerical equivalence coincide. We can, therefore, construct $\QQ$-Cartier divisors by specifying codimension one conditions on $\Kgnb{0,m}(G(k,n), d)$ and calculate their class by the method of test curves.
\smallskip

The study of the stable cone decomposition of $\Kgnb{0,0}(G(k,n),d)$ has two components. On the one hand, we construct effective divisors in a given numerical equivalence class and thereby limit the stable base locus.  On the other hand, we construct moving curve classes on subvarieties of $\Kgnb{0,0}(G(k,n),d)$ that have negative intersection with a divisor class, thereby showing that the stable base locus has to contain that variety. This analysis requires a good understanding of the cones of ample and effective divisors on Kontsevich moduli spaces, which have been studied in \cite{coskun:kontamp} and \cite{coskun:konteff} when the target is $\PP^n$ and in \cite{coskun:grass} when the target is $G(k,n)$.
\smallskip

When $d=2$ or $3$, one can run the Minimal Model Program for $\Kgnb{0,0}(\PP^d,d)$ giving a complete description of the birational models $\operatorname{Proj}(\oplus_{m \geq 0} H^0 (\OO(mD))$ for every integral effective divisor. $\Kgnb{0,0}(\PP^2,2)$ is isomorphic to the space of complete conics, or equivalently, to the blow-up of $\PP^5$ along a Veronese surface. $\Kgnb{0,0}(\PP^2,2)$  admits two divisorial contractions to $\PP^5$ and $(\PP^5)^*$ obtained by projection from the space of complete conics to the spaces of conics and dual conics, respectively. The resulting models can be given functorial interpretations.  $\PP^5$ can be interpreted either as the Chow variety or the Hilbert scheme $\Hilb_{2x+1}(\PP^2)$ of conics in $\PP^2$. $(\PP^5)^*$ can be interpreted as  the moduli space of weighted stable maps $\Kgnb{0,0}(\PP^2, 1, 1)$ constructed by Anca Musta\c{t}\v{a} and Andrei Musta\c{t}\v{a} in  \cite{mustata:kontsevich}. The Mori theory  of $\Kgnb{0,0}(\PP^3, 3)$ has been studied in \cite{dawei:cubic}. $\Kgnb{0,0}(\PP^3, 3)$ admits a divisorial contraction to the moduli space of weighted stable maps $\Kgnb{0,0}(\PP^3, 2, 1)$ and a flipping contraction to the normalization of the Chow variety. The flip is the component of the Hilbert scheme $\Hilb_{3x+1}(\PP^3)$ whose general point is a twisted cubic curve. This component of the Hilbert scheme admits a further divisorial contraction to the compactification of the space of twisted cubics in $G(3,10)$ by nets of quadrics vanishing on the curve.

\smallskip

The Mori theory of $\Kgnb{0,0}(G(2,4),2)$ can be similarly described in complete detail and gives rise to some beautiful classical projective geometry. $\Kgnb{0,0}(G(2,4),2)$ admits a divisorial contraction to the space of weighted stable maps $\Kgnb{0,0}(G(2,4),1,1)$
and two intermediate contractions over $\Kgnb{0,0}(G(2,4),1,1)$ which are flops of each other (see Theorem \ref{T} for precise statements).  $\Kgnb{0,0}(G(2,4),2)$ admits a flipping contraction to the (normalization of) the Chow variety. The flip of $\Kgnb{0,0}(G(2,4),2)$ over the Chow variety is the Hilbert scheme $\Hilb_{2x+1}(G(2,4))$, which is isomorphic to the blow-up of the Grassmannian $G(3,6)$ along (both components of) the orthogonal Grassmannian $OG(3,6)$. The Hilbert scheme admits a divisorial contraction to $G(3,6)$ blowing-down the inverse image of $OG(3,6)$ and two intermediate contractions blowing-down the inverse image of only one of the components of $OG(3,6)$. The latter two spaces are flips of $\Kgnb{0,0}(G(2,4),2)$ over contractions of the (normalization of) the Chow variety (see Theorem \ref{chow} for precise statements).
\smallskip

As $d$ gets larger, both the dimension of the Neron-Severi space and the number of chambers in the decomposition of the effective cone grow.  Already for $\Kgnb{0,0}(G(3,6),3)$ there are more than 20 chambers in the decomposition. It is unreasonable to expect a complete description  of the stable base locus decomposition of $\Kgnb{0,0}(G(k,n),d)$, or even of $\Kgnb{0,0}(\PP^n,d)$, for large $d$. Note that such a description would resolve many outstanding problems in moduli theory and classical algebraic geometry,  including the F-conjecture which describes the ample cones of the Deligne-Mumord moduli spaces $\Kgnb{g,n}$ of $n$-pointed genus-$g$ stable curves and many conjectures about the syzygies and secants of rational curves and scrolls. However, the methods of this paper can be used to obtain a rough description of the stable base locus decomposition even when $d>3$. In order to keep this paper at a reasonable length, we postpone this discussion to \cite{dawei:stable}.

\smallskip

The organization of this paper is as follows: In \S \ref{prelim}, we set the notation and introduce divisor classes that will play an important role in our discussion. In \S \ref{sec-two}, we determine the stable base locus decomposition of  $\Kgnb{0,0}(G(k,n),2)$ and describe the corresponding birational models. In \S \ref{sec-three}, we carry out the same analysis for  $\Kgnb{0,0}(G(k,n),3)$ with $3 \leq k < k+3 \leq n$. The description of the stable base locus decomposition of $\Kgnb{0,0}(G(2,n),3)$ is different. We carry out the analysis in \S \ref{sec-three-line}.
\smallskip

\noindent{\bf Acknowledgements:} It is a pleasure to thank Joe Harris and Jason Starr for many enlightening conversations over the years about the birational geometry of Kontsevich moduli spaces. We also want to thank Lawrence Ein, Robert Lazarsfeld and Mihnea Popa for a number of suggestions
on this work.

\section{Preliminary definitions and background}\label{prelim}

In this section, we introduce important ample and effective divisor classes  on the Kontsevich moduli space $\Kgnb{0,0}(G(k,n), d)$.  We refer the reader to \cite{coskun:kontamp}, \cite{coskun:konteff} and \cite{coskun:grass} for detailed information about the ample and effective cones of Kontsevich moduli spaces.

\begin{notation}
Let $G(k,n)$ denote the Grassmannian of $k$-dimensional subspaces of an $n$-dimensional vector space $V$. Let $\lambda$ denote a partition with $k$ parts satisfying $n-k \geq \lambda_1 \geq \cdots \geq \lambda_k \geq 0.$ Let $\lambda^*$ denote the partition dual to $\lambda$ with parts $\lambda^*_i = n-k - \lambda_{k-i+1}$. Let $F_{\bullet}: F_1 \subset \cdots \subset F_n$ denote a flag in $V$.  The Schubert cycle $\sigma_{\lambda}$ is the Poincar\'{e} dual of the class of the Schubert variety $\Sigma_{\lambda}$ defined by
$$\Sigma_{\lambda}(F_{\bullet}) = \{  [W] \in G(k,n) \ | \ \dim(W \cap F_{n-k+i - \lambda_i}) \geq i  \}.$$
 Schubert cycles  form a $\ZZ$-basis for the cohomology of $G(k,n)$.
\end{notation}

Let $\Kgnb{0,0}(G(k,n), d)$ denote the Kontsevich moduli space of stable maps to $G(k,n)$ of Pl\"ucker degree $d$.   Let $$\pi: \Kgnb{0,1}(G(k,n), d) \rightarrow \Kgnb{0,0}(G(k,n), d)$$ be the forgetful morphism and let  $$e: \Kgnb{0,1}(G(k,n), d) \rightarrow G(k,n)$$ be the evaluation morphism. We introduce the divisor classes that will be crucial for our discussion.

\noindent $\bullet$ Let  $\Hh= \pi_* e^* (\sigma_{1,1})$ and $\HH= \pi_* e^* (\sigma_{2})$. Geometrically, $\Hh$ (resp., $\HH$) is the class of the divisor of maps $f$ in $\Kgnb{0,0}(G(k,n), d)$ whose image intersects a fixed Schubert cycle $\Sigma_{1,1}$ (resp., $\Sigma_2$).
\smallskip

\noindent $\bullet$ Let $T_{\sigma_1}$ denote the class of the divisor of maps that are tangent to a fixed hyperplane section of $G(k,n)$.

\noindent $\bullet$ Let $\ddeg$ denote the class of the divisor $\Ddeg$ of maps in $\Kgnb{0,0}(G(k,k+d), d)$ whose image is contained in a sub-Grassmannian $G(k,k+d-1)$ embedded in $G(k, k+d)$ by an inclusion of the ambient vector spaces. More generally, for $n \geq k+d$, let $\ddeg$ denote the class of the divisor  of maps  $f$ in $\Kgnb{0,0}(G(k,n),d)$ such that the projection of the span of the linear spaces parameterized by the image of $f$ from a fixed linear space of dimension $n-k-d$ has dimension less than $k+d$.
\smallskip

\noindent $\bullet$ If $k$ divides $d$, then let $\dunb$ be the closure $\Dunb$ of the locus of maps  $f$ with irreducible domains for which the pull-back of the tautological bundle $f^*(S)$ has unbalanced splitting (i.e., $f^*(S) \not= \oplus_{i=1}^k \OO_{\PP^1}(-d/k)$). If $k$ does not divide $d$, let $d=k q + r$, where $r$ is the smallest non-negative integer that satisfies the equality. The subbundle of the pull-back of the tautological bundle of rank $k-r$ and degree $-q(k-r)$ induces a rational map $$\phi: \Kgnb{0,0}(G(k,k+d), d) \dashrightarrow \Kgnb{0,0}(F(k-r, k ; k+d), q(k-r), d).$$ The natural projection $\pi_{k-r}: F(k-r, k ; k+d) \rightarrow G(k-r, k+d)$ from the two-step flag variety to the Grassmannian induces a morphism $$\psi: \Kgnb{0,0}(F(k-r, k , k+d), q(k-r), d) \rightarrow \Kgnb{0,0}(G(k-r,  k+d), q(k-r)).$$ The maps whose linear spans intersect a linear space of codimension  $(q+1)(k-r)$ is a divisor $D$ in $\Kgnb{0,0}(G(k-r,  k+d), q(k-r))$.  Let $\dunb= \phi^* \psi^* ([D])$.
\smallskip

We summarize the basic facts about the Picard group, and the cones of NEF and effective divisors in the following theorem.

\begin{theorem}\label{summary} Let $\Kgnb{0,0}(G(k,n), d)$ denote the Kontsevich moduli space of stable maps to $G(k,n)$ of Pl\"ucker degree $d$.  Then:
\end{theorem}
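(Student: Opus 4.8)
The plan is to assemble the three assertions of the theorem—the structure of the Picard group, the description of the cone of NEF divisors, and the description of the cone of effective divisors—from the computations carried out in \cite{coskun:kontamp}, \cite{coskun:konteff}, and \cite{coskun:grass}, and to translate each into the basis furnished by the divisor classes introduced above. First I would pin down $\operatorname{Pic}(\Kgnb{0,0}(G(k,n),d)) \otimes \QQ$. Since we have already noted that linear and numerical equivalence coincide here, it suffices to exhibit a spanning set and compute the rank, which in the cases of interest is three. The natural candidates are the pullbacks $\Hh$ and $\HH$ of the two codimension-one Schubert classes $\sigma_{1,1}$ and $\sigma_2$ under $\pi_* e^*$, together with a boundary class. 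I would verify independence by pairing these divisors against an explicit collection of test curves—for instance a pencil of degree-$d$ maps into a fixed linear space, and a family degenerating the image to a union of lower-degree maps—and checking that the resulting intersection matrix is nonsingular.

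For the cone of NEF divisors I would invoke the ample-cone computation of \cite{coskun:kontamp} and \cite{coskun:grass}, which determines the extremal rays of the Mori cone of curves and hence the facets of the NEF cone; taking closures then yields the NEF cone itself. Concretely, each extremal NEF ray should be represented by $\Hh$, $\HH$, $T_{\sigma_1}$, or a positive combination of these, and I would confirm nefness of each generator by checking that it has nonnegative degree on every extremal curve class in the Mori cone, reducing the verification to the finitely many curve classes already identified in \emph{loc.\ cit.}

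The cone of effective divisors is the most delicate and follows the two-sided strategy outlined in the introduction. For the bound showing a class lies outside the effective cone, I would construct, for each proposed extremal ray, a moving curve on a suitable subvariety—the image of a boundary stratum, or of one of the rational maps $\phi$, $\psi$ appearing in the definition of $\dunb$—whose intersection with any divisor strictly beyond that ray is negative, forcing ineffectivity. For the complementary bound I would exhibit explicit effective representatives of the extremal rays: the geometrically defined loci $\Ddeg$ and $\Dunb$ supply the classes $\ddeg$ and $\dunb$, whose effectivity is immediate from their construction, while $\Hh$ and $\HH$ are visibly effective as pullbacks of Schubert conditions.

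The main obstacle will be the computation of $\dunb$ and the verification that $\Ddeg$ and $\Dunb$ genuinely span extremal rays. The class $\dunb$ is defined through the rational map $\phi$ to a flag-variety Kontsevich space followed by $\psi$ and a pullback of a Schubert divisor, so expressing $\dunb$ in the $\{\Hh, \HH\}$-plus-boundary basis requires controlling how these birational maps act on Picard groups and checking that the pullback is well defined across the indeterminacy locus. Pairing $\Dunb$ against the moving curves that detect the splitting type of $f^*(S)$—and showing the pairing vanishes exactly on the correct facet and is negative just outside it—is where the geometry of unbalanced bundles enters in an essential way, and I expect the bulk of the technical work to concentrate there.
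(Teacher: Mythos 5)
The paper gives no proof of this theorem at all: it is a summary statement, with part (1) attributed to Theorem~1 of \cite{dragos:taut} and parts (2) and (3) attributed to Theorems~1.1 and~1.2 of \cite{coskun:grass}. So there is nothing internal to compare your sketch against; the only question is whether your reconstruction is faithful to what those cited results actually say and how they are proved.

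On that score there are two concrete problems. First, you assert that the Picard rank ``in the cases of interest is three.'' The theorem is stated for arbitrary degree $d$, and part (1) says the Picard group is generated by $\Hh$, $\HH$ and the boundary classes $\Delta_{k,d-k}$ for $1 \leq k \leq \lfloor d/2 \rfloor$, so the rank is $2 + \lfloor d/2 \rfloor$; it equals three only for $d=2,3$. Your test-curve strategy for independence is fine, but you need one boundary class per value of $k$, not a single boundary class. Second, and more seriously, your description of the NEF cone omits an essential ingredient of part (2): the cone is \emph{not} spanned by $\Hh$, $\HH$ and $T_{\sigma_1}$ alone. The statement involves the injective linear map $v: \mbox{Pic}(\overline{M}_{0,d}/\mathfrak{S}_d)\otimes\QQ \to \mbox{Pic}(\Kgnb{0,0}(G(k,n),d))\otimes\QQ$, and a divisor is NEF if and only if it is a non-negative combination of $\Hh$, $\HH$, $T_{\sigma_1}$ \emph{and} classes $v(D')$ with $D'$ NEF on $\overline{M}_{0,d}/\mathfrak{S}_d$. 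This is where the real difficulty of the theorem sits: the NEF cone of $\Kgnb{0,0}(G(k,n),d)$ is only determined up to knowledge of the NEF cone of $\overline{M}_{0,d}/\mathfrak{S}_d$ (an F-conjecture-type question), and no finite list of extremal curve classes is known in general. A plan that proposes to ``reduce the verification to the finitely many curve classes already identified'' would not close for general $d$. Your outline for part (3) --- moving curves on subvarieties to bound the effective cone from outside, together with the explicit effective representatives $\Ddeg$ and $\Dunb$ --- is consistent with the strategy of \cite{coskun:grass}, and your identification of the computation of $\dunb$ through the maps $\phi$ and $\psi$ as the technical core is reasonable.
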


\noindent $(1)$ [Theorem 1, \cite{dragos:taut}] {\it The Picard group $\mbox{Pic}(\Kgnb{0,0}(G(k,n),d))\otimes \QQ$ is generated by the divisor classes $\Hh$, $\HH$, and the classes of the boundary divisors $\Delta_{k,d-k}$, $1 \leq k \leq \lfloor d/2 \rfloor$. }
\smallskip

\noindent $(2)$ [Theorem 1.1, \cite{coskun:grass}] {\it There is an explicit, injective linear map $$v: \mbox{Pic}(\overline{M}_{0,d}/ \mathfrak{S}_d)\otimes \QQ \rightarrow \mbox{Pic}(\Kgnb{0,0}(G(k,n),d))\otimes \QQ$$ that maps base-point-free divisors and NEF divisors to base-point-free divisors and NEF divisors, respectively. A divisor class $D$ in $\Kgnb{0,0}(G(k,n), d)$ is NEF if and only if $D$ can be expressed as a non-negative linear combination of $\Hh, \HH, T_{\sigma_1}$ and $v(D')$, where $D'$ is a NEF divisor in $\overline{M}_{0,d}/ \mathfrak{S}_d$.}
\smallskip

\noindent $(3)$ [Theorem 1.2, \cite{coskun:grass}] {\it  A divisor class $D$ in $\Kgnb{0,0}(G(k, k+d), d)$ is effective if and only if it can be expressed as a non-negative linear combination of} $\ddeg$, $\dunb$ { \it and the boundary divisors $\Delta_{k,d-k}$, $1 \leq k \leq \lfloor d/2 \rfloor$.}
\smallskip

\begin{remark}
In Part (2) of Theorem \ref{summary},  $\overline{M}_{0,d}$ denotes the Deligne-Mumford moduli space of $d$-pointed, genus-zero stable curves. The symmetric group $\mathfrak{S}_d$ on $d$-letters acts on the labeling of the marked points.

If we identify the Neron-Severi space of $\Kgnb{0,0}(G(k,n),d)$ with the vector space spanned by the divisor classes $\Hh$, $\HH$, and the classes of the boundary divisors $\Delta_{k,d-k}$, $1 \leq k \leq \lfloor d/2 \rfloor$, then the effective cone of $\Kgnb{0,0}(G(k,n),d)$  is contained in the effective cone of $\Kgnb{0,0}(G(k,n+1),d)$, with equality if $n \geq k+d$. Hence, Part (3) of Theorem \ref{summary} determines the effective cone of $\Kgnb{0,0}(G(k,n),d)$ for every $n \geq k+d$.
\end{remark}
\smallskip

\begin{remark}
The canonical class of $\Kgnb{0,0}(G(k,n),d)$ can be easily derived from Proposition 7.2 of \cite{jason:canonical}: $$K = \left( \frac{n}{2} - k -1 - \frac{n}{2d} \right) \Hh + \left( k - \frac{n}{2} - 1 - \frac{n}{2d} \right) \HH + \sum_{i=1}^{\lfloor d/2 \rfloor} \left( \frac{n i (d-i)}{2d} -2 \right) \Delta_i.$$ For most of the cases we consider below, $-K$ will not lie in the ample cone.
\end{remark}

\section{Degree two maps to Grassmannians}\label{sec-two}

In this section, we let  $2 \leq k < k+2 \leq n$ and discuss the stable base locus decomposition of $\Kgnb{0,0}(G(k,n),2)$. The divisor classes introduced in Section \ref{prelim} have the following expressions (see \cite{coskun:grass} \S 4 and \S 5) in terms of the basis $\Hh, \HH$ and the boundary divisor $\Delta= \Delta_{1,1}$.

\begin{eqnarray*}
T &=& \frac{1}{2}\left(\Hh + \HH + \Delta \right) \\
\ddeg &=& \frac{1}{4}(-\Hh + 3 \HH - \Delta) \\
\dunb &=& \frac{1}{4}(3\Hh - \HH - \Delta)
\end{eqnarray*}

Most questions about the divisor theory of $\Kgnb{0,0}(G(k,n),2)$ can be reduced to studying the divisor theory of $\Kgnb{0,0}(G(2,4),2)$. Let $W$ be a four-dimensional subspace of $V$. Let $U$ be a $(k-2)$-dimensional subspace of $V$ such that $U \cap W = 0$. Given a two-dimensional subspace $\Lambda$ of $W$, the span of $\Lambda$ and $U$ is a $k$-dimensional subspace of $V$.  Hence, there is an inclusion $i: G(2,4) \rightarrow G(k,n)$, which induces a morphism $$\phi: \Kgnb{0,0}(G(2,4),2) \rightarrow \Kgnb{0,0}(G(k,n),2).$$ It is easy to see that $$\phi^*(\Hh) = \Hh, \ \phi^*(\HH) = \HH, \ \phi^*(\Delta) = \Delta.$$ Under this correspondence, the stable base-locus-decomposition of $\Kgnb{0,0}(G(k,n),2)$ and  $\Kgnb{0,0}(G(2,4),2)$ coincide. This correspondence guides many of the constructions in this section.

\begin{remark}
The geometry of $\Kgnb{0,0}(G(2,4),2)$ is closely related to the geometry of quadric surfaces in $\PP^{3}$. The lines parameterized by a point in $\Kgnb{0,0}(G(2,4),2)$ sweep out a degree two surface in $\PP^{3}$. The maps parameterized by $\Ddeg$ correspond to those that span a plane two-to-one. The maps parameterized by $\Dunb$ correspond to those that sweep out a quadric cone.
\end{remark}

In order to understand the stable base locus decomposition of $\Kgnb{0,0}(G(k,n),2)$, we need to introduce one more divisor class. Set $N = {n \choose k}$. Let $p: \Kgnb{0,0}(G(k,n),2) \dashrightarrow G(3,N)$ denote the rational map, defined away from the locus of double covers of a line in $G(k,n)$, sending a stable map to the $\PP^2$ spanned by its image in the Pl\"{u}cker embedding of $G(k,n)$. This map gives rise to a well-defined map $p^*$ on Picard groups. Let $P = p^*(\OO_{G(3,N)}(1))$. Geometrically, $P$ is the class of the closure of the locus  of maps $f$ such that the linear span of the image of $f$ (viewed in the Pl\"{u}cker embedding of $G(k,n)$) intersects a fixed codimension three linear space  in $\PP^{N-1}$.

\begin{lemma}\label{p-two}
The divisor class $P$ is equal to
$$P = \frac{1}{4}(3\Hh + 3\HH - \Delta).$$
\end{lemma}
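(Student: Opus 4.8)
The plan is to pin down the three coefficients of $P$ in the basis $\Hh,\HH,\Delta$ of $\operatorname{Pic}(\Kgnb{0,0}(G(k,n),2))\otimes\QQ$ furnished by Theorem \ref{summary}(1), using the method of test curves. Write $P=a\Hh+b\HH+c\Delta$. Since these three classes pull back to themselves under the morphism $\phi:\Kgnb{0,0}(G(2,4),2)\to\Kgnb{0,0}(G(k,n),2)$, the intersection number of any curve $B$ lying in the image of $\phi$ against $\Hh,\HH,\Delta$ agrees with the corresponding intersection number computed inside $\Kgnb{0,0}(G(2,4),2)$. So I would realize all three test curves from families of conics lying in a fixed linearly embedded $G(2,4)\subset G(k,n)$, where $G(2,4)$ is a smooth quadric fourfold in its Pl\"ucker $\PP^5\subset\PP^{N-1}$. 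The coefficient-hunting then becomes concrete plane-and-quadric geometry, with the one caveat that $P$ itself must be evaluated in the ambient $\PP^{N-1}$ through the spanning-plane map $p$.

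I would use three families. Let $B_1$ be a general pencil of conics in a fixed $\PP^2$ of lines through a point (a representative of $\sigma_2$), let $B_2$ be a general pencil of conics in a fixed $\PP^2$ of lines in a plane (a representative of $\sigma_{1,1}$), and let $B_3$ be the pencil of plane sections $H\cap G(2,4)$ of the smooth quadric surface cut on $G(2,4)$ by a general $\PP^3$, where $H$ ranges over the planes containing a fixed general line. For $B_1$ and $B_2$ the spanning plane is the constant ambient $\PP^2$, so $p$ is constant and $P\cdot B_1=P\cdot B_2=0$; the incidences give $(\Hh\cdot B_1,\HH\cdot B_1,\Delta\cdot B_1)=(0,1,3)$ and symmetrically $(1,0,3)$ for $B_2$. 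Here I use that two planes of the same type meet in a single point (through which exactly one conic of a plane pencil passes), two planes of opposite type are disjoint for general flags, and a general pencil of plane conics has three singular members. For $B_3$ the spanning planes $H$ vary in a pencil, hence trace a line in $G(3,N)$, so $P\cdot B_3=\deg\OO_{G(3,N)}(1)\big|_{\mathrm{line}}=1$; moreover $\Hh\cdot B_3=\HH\cdot B_3=1$ (the fixed $\sigma_{1,1}$- or $\sigma_2$-plane lies on the quadric, so the unique $H$ meeting it meets it inside the conic $H\cap G(2,4)$), while $\Delta\cdot B_3=2$ since exactly two planes of the pencil are tangent to the quadric surface.

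Solving the linear system $b+3c=0$, $a+3c=0$, $a+b+2c=1$, whose matrix has determinant $4$ so that the curves are numerically independent, yields $a=b=\tfrac34$ and $c=-\tfrac14$, that is $P=\tfrac14(3\Hh+3\HH-\Delta)$. As internal consistency checks I would confirm that the answer forces $B_1\subset\Dunb$ and $B_2\subset\Ddeg$: indeed $\dunb\cdot B_1=-1$ and $\ddeg\cdot B_2=-1$, exactly as it must be, since the lines of a conic through a fixed point sweep out a cone and the lines of a conic in a fixed plane double-cover that plane.

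The main obstacle is the rational nature of $p$, which is undefined precisely along the locus of double covers of lines, so I must ensure that each $B_i$ avoids this locus and that $P$ restricts to the honest line bundle $p^{*}\OO_{G(3,N)}(1)$ along it. For $B_1$ and $B_2$ this amounts to checking that a general plane pencil of conics contains no double line, which holds because the double lines form a surface of codimension three in the $\PP^5$ of plane conics; for $B_3$ it amounts to checking that each section $H\cap G(2,4)$, even when it degenerates to a line-pair, still spans $H$, so that $p$ extends to a genuine morphism on $B_3$ with image a line. The remaining care is to guarantee that the boundary and tangency counts on each $B_i$ occur transversally and with multiplicity one, which follows by choosing the flags, the point, the plane, and the line that define the families generically.
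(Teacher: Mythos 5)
Your proposal is correct and follows essentially the same route as the paper: the same three test families (a pencil of conics in a $\Sigma_{(2)^*}$-plane, a pencil in a $\Sigma_{(1,1)^*}$-plane, and a pencil of plane sections of a quadric surface cut on $G(2,4)$ by a codimension two linear space), the same intersection numbers $(0,1,3;0)$, $(1,0,3;0)$, $(1,1,2;1)$, and the same linear algebra yielding $P=\tfrac14(3\Hh+3\HH-\Delta)$. Your extra care about the indeterminacy locus of $p$ and the consistency checks against $\ddeg$ and $\dunb$ are sound but not needed beyond what the paper records.
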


\begin{proof}
The divisor class $P$ can be computed by intersecting with test families. Let $\lambda = (1,1)^*$ and $\mu= (2)^*$ be the partitions dual to $(1,1)$ and  $(2)$, respectively.  In the Pl\"ucker embedding of $G(k,n)$, both $\Sigma_{\lambda}$ and $\Sigma_{\mu}$ are linear spaces of dimension two. Let $C_1$ and $C_2$, respectively,  be the curves in $\Kgnb{0,0}(G(k,n),2)$ induced by  a general pencil of conics in a fixed $\Sigma_{\lambda}$, respectively, $\Sigma_{\mu}$. Let $\tilde{C}_3$ be the curve in $\Kgnb{0,0}(G(2,4),2)$ induced by a general pencil of conics in a general codimension two linear section of $G(2,4)$ in its Pl\"{u}cker embedding. Let $C_3 = \phi(\tilde{C}_3)$. The following intersection numbers are easy to compute.
$$C_1 \cdot \Hh = 1, \  \  C_1 \cdot \HH = 0, \  \   C_1 \cdot \Delta = 3, \ \ C_1 \cdot P = 0$$
$$C_2 \cdot \Hh = 0, \  \  C_2 \cdot \HH = 1, \  \   C_2 \cdot \Delta = 3, \ \ C_2 \cdot P = 0$$
$$C_3 \cdot \Hh = 1,  \  \ C_3 \cdot \HH = 1, \ \ C_3 \cdot \Delta = 2, \ \ C_3 \cdot P = 1$$
Let $a\sigma_{\lambda} + b \sigma_{\mu}$ be the cohomology class of the surface swept out by the images of the maps parameterized by a curve $C$ in $\Kgnb{0,0}(G(k,n),2)$. Then the intersection number of  $C$  with $\Hh$ (resp., $\HH$) is equal to $a$ (resp., $b$). Since $C_1, C_2$ and $C_3$  sweep out  surfaces with cohomology class $\sigma_{\lambda}, \sigma_{\mu}$ and  $\sigma_{\lambda} + \sigma_{\mu}$, respectively, the intersection numbers of these curves with $\Hh$ and $\HH$ are as claimed. A general pencil of conics in the plane has three reducible elements. A general pencil of conics in a quadric surface has two reducible elements. Since the total space of the surfaces are smooth at the nodes, the intersections with the boundary divisor are transverse. Therefore, the intersection numbers of the curves $C_i$ with $\Delta$ are as claimed. Finally, the intersection numbers of the curves $C_i$ with $P$ are clear.
The class $P$ is determined by these intersection numbers.
\end{proof}
\smallskip

\begin{notation}
Given two divisor classes  $D_1, D_2$, let $c(D_1 D_2)$ (respectively, $c(\overline{D}_1 \overline{D}_2)$) denote the open (resp., closed) cone in the Neron-Severi space spanned by positive (resp., non-negative) linear combinations of $D_1$ and $D_2$.  Let  $c(D_1\overline{D}_2)$ denote the cone spanned by linear combinations $$c(D_1\overline{D}_2)= \{aD_1 + bD_2 | a \geq 0, b>0\}.$$
The domain in $\RR^3$ bounded by the divisor classes $D_1, D_2, \dots, D_r$ is the open domain bounded by $c(\overline{D}_1\overline{D}_2), c(\overline{D}_2 \overline{D}_3), \dots, c(\overline{D}_r \overline{D}_1)$.
\end{notation}

\begin{definition}
Let $\qqq[\lambda]$ denote the closure of the locus of maps $f$ in $\Kgnb{0,0}(G(k,n),2)$ with irreducible domain such that the map $f$  factors through the inclusion of some Schubert variety $\Sigma_{\lambda}$ in $G(k,n)$.
\end{definition}

\begin{example}
For example, $\qqq[(1)^*]$ denotes the locus of maps two-to-one onto a line in the Pl\"{u}cker embedding of $G(k,n)$. The union of $\qqq[(1,1)^*]$ and $\qqq[(2)^*]$ in $\Kgnb{0,0}(G(k,n),2)$ is the  locus of maps $f$ such that the span of $f$ is contained in $G(k,n)$. The linear spaces parameterized by a general map in $\qqq[(1,1)^*]$ sweep out a $\PP^k$ two-to-one. The linear spaces parameterized by a general map in $\qqq[(2)^*]$ sweep out a $k$-dimensional cone over a conic curve.
\end{example}

Theorem \ref{de-two} and Figure \ref{fig:conedecomp} describe the eight chambers in the stable base locus decomposition of $\Kgnb{0,0}(G(k,n),2)$.  In the figure, we draw a cross-section of the three-dimensional cone and mark each chamber with the corresponding number that describes the chamber in the theorem.

\begin{figure}[ht!] %  figure placement: here, top, bottom, or page
   \centering
   \includegraphics[width=4in]{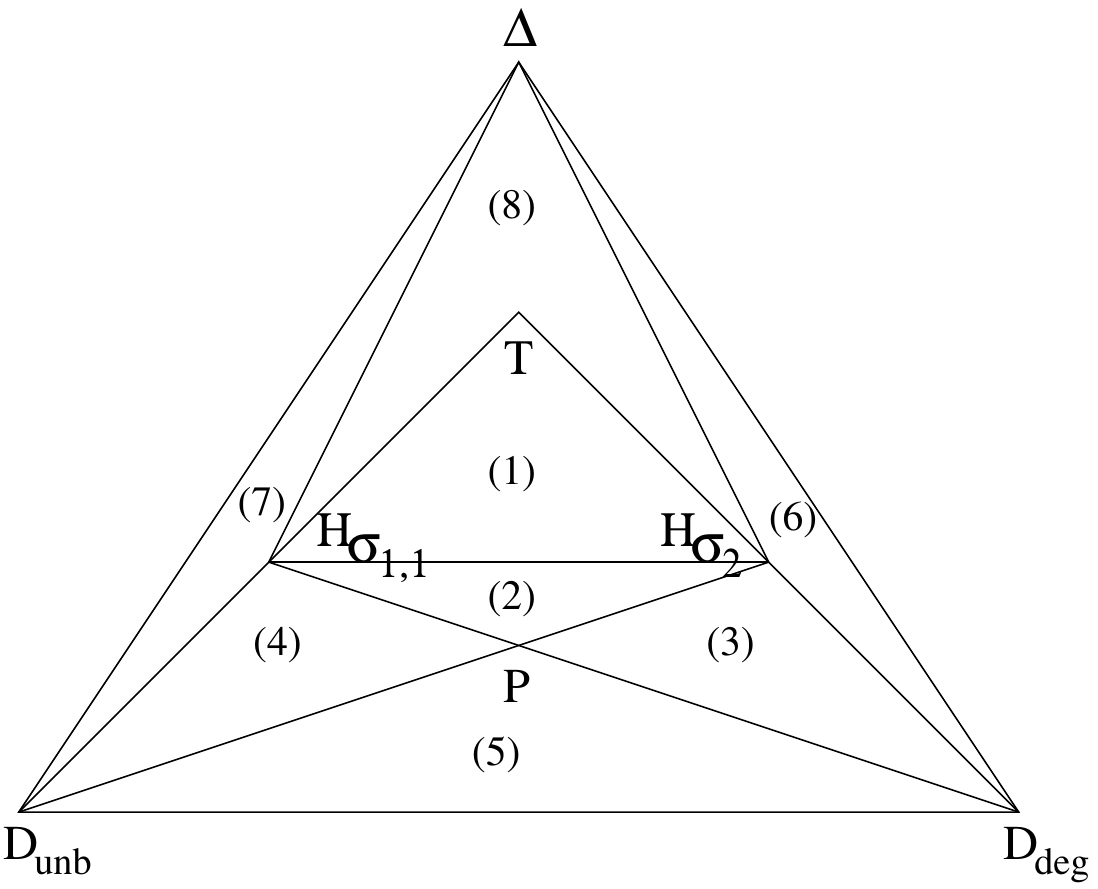}
   \caption{The stable base locus decomposition of $\Kgnb{0,0}(G(k,n),2)$.}
   \label{fig:conedecomp}
\end{figure}

\begin{theorem}\label{de-two}
The stable base locus decomposition partitions the effective cone of $\Kgnb{0,0}(G(k,n),2)$ into the following chambers:

\begin{enumerate}
\item In the closed cone spanned by non-negative linear combinations of $\Hh, \HH$ and $T$, the stable base locus is empty.
\smallskip

\item In the domain bounded by  $\Hh, \HH$ and $P$ union  $c(\Hh \overline{P}) \cup c(\HH  \overline{P})$, the stable base locus consists of the locus $\qqq[(1)^*]$ of maps two-to-one onto a line in $G(k,n)$.
\smallskip

\item In the domain  bounded by  $\HH,\ddeg$ and $P$ union  $c(\HH \overline{D}_{deg}) \cup c(P \overline{D}_{deg})$, the stable base locus consists of the locus $\qqq[(1,1)^*]$.
\smallskip

\item In the domain bounded by $\Hh, D_{unb}$ and $P$  union  $c(\Hh \overline{D}_{unb}) \cup c(P \overline{D}_{unb})$,  the stable base locus consists of the locus $\qqq[(2)^*]$.
\smallskip

\item In the domain bounded by  $P, \ddeg$ and $\dunb$ union   $c(\ddeg \dunb)$,  the stable base locus consists of the union  $\qqq[(1,1)^*] \cup \qqq[(2)^*]$.
\smallskip

\item In the domain bounded by $\HH, \ddeg$ and $\Delta$ union  $c(\ddeg \Delta)$, the stable base locus consists of the union of the boundary divisor and  $\qqq[(1,1)^*]$.
\smallskip

\item In the domain bounded by  $\Hh, \dunb$ and $\Delta$ union $c(\dunb \Delta)$, the stable base locus consists of the union of the boundary divisor and $\qqq[(2)^*]$.
\smallskip

\item Finally, in the domain  bounded by $ \Hh, T, \HH$ and $\Delta$  union $c(\HH \overline{\Delta}) \cup c(\Hh \overline{\Delta})$ the stable base locus consists of  the boundary divisor.
\end{enumerate}

\end{theorem}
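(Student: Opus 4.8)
The plan is to establish, chamber by chamber, the two inclusions $\mathbf{B}(D)\subseteq B$ and $B\subseteq\mathbf{B}(D)$ for every class $D$ in the chamber, where $B$ is the locus listed in Theorem \ref{de-two}. Two elementary principles organize everything: if $D=\sum_i a_iG_i$ with $a_i\geq 0$, then after clearing denominators and multiplying sections one gets $\mathbf{B}(D)\subseteq\bigcup_i\mathbf{B}(G_i)$; and if a curve $C$ sweeping out a subvariety $Y$ satisfies $C\cdot D<0$, then $Y\subseteq\mathbf{B}(D)$. The first is used for upper bounds (effective divisors), the second for lower bounds (moving curves). Because of the morphism $\phi:\Kgnb{0,0}(G(2,4),2)\to\Kgnb{0,0}(G(k,n),2)$ with $\phi^*\Hh=\Hh$, $\phi^*\HH=\HH$, $\phi^*\Delta=\Delta$, under which the decompositions agree, I would carry out all the delicate constructions on $\Kgnb{0,0}(G(2,4),2)$ using the dictionary with pencils of quadric surfaces in $\PP^3$, and then transport them.

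For the upper bounds I would first record the base locus of each generator. By Theorem \ref{summary}(2) the classes $\Hh,\HH,T$ are nef, and being pullbacks of base-point-free classes under the natural morphisms they are base-point-free; this gives chamber (1). Since $P=p^*\OO_{G(3,N)}(1)$ and $p$ is a morphism away from the double covers of a line, $\mathbf{B}(P)\subseteq\qqq[(1)^*]$. The effective divisors $\Ddeg$ and $\Dunb$ move as one varies the auxiliary linear space, and I would show $\mathbf{B}(\ddeg)\subseteq\qqq[(1,1)^*]$ and $\mathbf{B}(\dunb)\subseteq\qqq[(2)^*]$ by checking that any map outside these loci is avoided by a suitable degeneracy divisor. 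Finally the boundary is the unique effective divisor in its class, so $\mathbf{B}(\Delta)$ is the boundary. Expressing a class of a given chamber as a non-negative combination of the generators spanning it and applying the union bound then yields $\mathbf{B}(D)\subseteq B$ in all eight cases, using $\qqq[(1)^*]\subseteq\qqq[(1,1)^*]\cap\qqq[(2)^*]$ for consistency.

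For the reverse inclusions the pencils of Lemma \ref{p-two} are already the required moving curves. The curve $C_1$ sweeps out $\qqq[(1,1)^*]$ and satisfies $C_1\cdot\HH=C_1\cdot P=0$ and $C_1\cdot\ddeg=-1$, so $C_1\cdot D<0$ on the interiors of chambers (3) and (5); symmetrically $C_2$ handles $\qqq[(2)^*]$ on (4) and (5). A pencil $R$ of double covers of a fixed line, translated over all lines in $G(k,n)$, covers $\qqq[(1)^*]$ with $R\cdot\Hh=R\cdot\HH=0$ and $R\cdot\Delta>0$, whence $R\cdot P<0$, placing $\qqq[(1)^*]$ in the base locus across the wall spanned by $\Hh$ and $\HH$. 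This settles the lower bounds for chambers (2)--(5), none of which meets the ray $\Delta$.

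The hard part will be the lower bounds in chambers (6), (7), (8), which border the ray $\Delta$. Any curve covering a locus $\qqq[\,\cdot\,]$ meets the boundary non-negatively (indeed $C_1\cdot\Delta=3$), and a short stability argument shows there is likewise no boundary-covering curve with $\Hh$- and $\HH$-degree zero; hence no single moving curve can be negative on all of a chamber touching $\Delta$, and the naive approach fails on the sub-regions leaning towards $\Delta$. I would resolve this by treating $\Delta$ as a fixed component: since the effective cone is the simplicial cone on $\Ddeg,\Dunb,\Delta$ and $\Delta$ is rigid with a covering curve of negative self-intersection, I expect every class with positive $\Delta$-coordinate to contain the boundary in its stable base locus, covering chambers (6)--(8); stripping off this fixed part leaves a movable class whose base locus is governed by chambers (3) and (4) and is certified by $C_1,C_2$. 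Making this precise---verifying that $\Delta$ is genuinely a fixed component and that the residual movable class lands in the expected chamber---is the main technical obstacle, and is cleanest on the explicit model $\Kgnb{0,0}(G(2,4),2)$, where it corresponds to the blow-up of $\PP^5$ along the Veronese and the geometry of pencils of quadric surfaces.
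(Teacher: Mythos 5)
Your treatment of chambers (1)--(5) is essentially the paper's argument: upper bounds come from the union bound applied to the base loci of the generators $\Hh,\HH,T,P,\ddeg,\dunb,\Delta$, and lower bounds come from the moving curves $C_1$, $C_2$ and the pencil of double covers of a fixed line ($C_8\cdot D=c$). The genuine gap is in chambers (6)--(8). You assert that the moving-curve method fails near the ray $\Delta$ and substitute the claim that every class with positive $\Delta$-coordinate in the simplicial effective cone on $\ddeg,\dunb,\Delta$ has the boundary as a fixed component. That principle is false: $T=\ddeg+\dunb+\Delta$ in those coordinates, yet $T$ is base-point-free, so positivity of the $\Delta$-coordinate cannot force the boundary into the stable base locus. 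Even restricting the claim to chambers (6)--(8), rigidity of $\Delta$ does not by itself show that $\Delta$ is a fixed component of $|mD|$ for such $D$, and ``stripping off'' $\gamma\Delta$ does not give $H^0(mD)\cong H^0(m(D-\gamma\Delta))$, which is what you would need to transfer the lower bound $\qqq[(1,1)^*]\subseteq\mathbf{B}(D)$ from chamber (3) down to chamber (6); note that $C_1\cdot D=a+3c$ is positive on the part of chamber (6) near $\Delta$, so $C_1$ alone really does not suffice there.

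The paper closes exactly this gap with two further pairs of moving curves, which you omit. The one-parameter family of conics tangent to four general lines in a fixed $\Sigma_{(1,1)^*}$ (resp.\ $\Sigma_{(2)^*}$) gives a curve class $C_4$ (resp.\ $C_5$) covering $\qqq[(1,1)^*]$ (resp.\ $\qqq[(2)^*]$) with $C_4\cdot D=2a$ (resp.\ $C_5\cdot D=2b$) for $D=a\Hh+b\HH+c\Delta$; since $a<0$ throughout chamber (6) and $b<0$ throughout chamber (7), these certify the scroll loci there. Attaching a fixed line at the base point of a pencil of lines in $\Sigma_{(1,1)^*}$ (resp.\ $\Sigma_{(2)^*}$) gives classes $C_6$, $C_7$ whose deformations cover the boundary divisor, with $C_6\cdot D=a-c$ and $C_7\cdot D=b-c$; every $D$ in chambers (6)--(8) satisfies $a<c$ or $b<c$, so these two classes jointly certify the boundary. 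Your observation that no \emph{single} curve class can be negative on an entire chamber touching $\Delta$ is correct but beside the point: one uses several curve classes covering the same locus, whose regions of negativity together cover the chamber. Replacing your fixed-component heuristic with these explicit families would complete the proof along the paper's lines.
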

\smallskip

\begin{proof}
Since the effective cone of $\Kgnb{0,0}(G(k,n), 2)$ is generated by non-negative linear combinations of $\ddeg$, $\dunb$ and  $\Delta$, the stable base locus of any divisor has to be contained in the union of the stable base loci of $\ddeg$, $\dunb$ and the boundary divisor.  We first check that the loci described in the theorem are in the stable base locus of the claimed divisors.  To show that a variety $X$ is in the base locus of a linear system $|D|$, it suffices to cover $X$ by curves $C$ that have negative intersection with $D$.
\smallskip

Express a general divisor $D= a \Hh + b \HH + c\Delta$.  Recall from the proof of Lemma \ref{p-two} that $C_1$ and $C_2$ are the curves induced by pencils of conics in $\Sigma_{\lambda} $ and $\Sigma_{\mu}$, respectively, where $\lambda= (1,1)^*$ and $\mu= (2)^*$. The intersection numbers of $C_1$ and $C_2$ with $D$ are
$$C_1 \cdot D = a + 3c, \  \  \  \  C_2 \cdot D = b + 3c.$$ Since curves in the class $C_1$ (resp., $C_2$) cover $\qqq[(1,1)^*]$ (resp., $\qqq[(2)^*]$), we conclude that $\qqq[(1,1)^*]$ (resp., $\qqq[(2)^*]$) is in the base locus of the linear system $|D|$ if $a + 3c <0$ (resp., $b + 3c<0$). In other words, $\qqq[(1,1)^*]$ is in the base locus of the divisors contained in the interior of the cone generated by  $\ddeg, \dunb$ and $\ddeg + \Delta/3$ and in $c(\dunb \overline{D}_{deg})$. Similarly, $\qqq[(2)^*]$ is in the base locus of a divisor contained in the interior of the cone generated by $\ddeg, \dunb$ and $\dunb + \Delta/3$ and in $c(\ddeg \overline{D}_{unb})$.
\smallskip

Let $C_4$ and $C_5$ be the curves induced in $\Kgnb{0,0}(G(k,n), 2)$ by the one parameter family of conics tangent to four general lines in a fixed $\Sigma_{\lambda}$ and $\Sigma_{\mu}$, respectively.  It is straightforward to see that $$C_4 \cdot D= 2a, \ \ \ \  C_5 \cdot D = 2b.$$ Curves of type $C_4$ and $C_5$ cover $\qqq[(1,1)^*]$ and $\qqq[(2)^*]$, respectively. Consequently, if $a < 0$ (resp., $b<0$) $\qqq[(1,1)^*]$ (resp., $\qqq[(2)^*]$) is in the base locus of $|D|$. We conclude that $\qqq[(1,1)^*]$ is in the base locus of any divisor contained in the region bounded by $\ddeg, \Delta, \HH$ and $\dunb$ and in $c(\Delta \overline{D}_{deg}) \cup c(\dunb \overline{D}_{deg})$.  Similarly, $\qqq[(2)^*]$ is in the base locus of any divisor contained in the region bounded by $\dunb, \Delta, \Hh$ and $\ddeg$ and in $c(\Delta \overline{D}_{unb}) \cup c(\ddeg \overline{D}_{unb})$.
\smallskip

Next let $C_6$ and $C_7$ be the curves induced by attaching a line at the base point of  a pencil of lines in $\Sigma_{\lambda}$ and $\Sigma_\mu$, respectively. These curves have the following intersection numbers with $D$:
$$C_6 \cdot D = a - c, \ \ \ \ C_7\cdot D = b - c.$$ Since deformations of the curves in the same class as $C_6$ and $C_7$ cover the boundary divisor, we conclude that the  boundary divisor is in  the base locus of $|D|$ if $a -c < 0$ or if $b-c<0$.  Hence, the boundary divisor is in the base locus of the divisors contained in the region bounded by $\dunb,  T, \ddeg$ and  $\Delta$ and in $c(D_{unb}\overline{\Delta}) \cup c(D_{deg}\overline{\Delta})$.
\smallskip

Finally, consider the one-parameter family $C_8$ of two-to-one covers of a line $l$ in $G(k,n)$ branched along a fixed point $p \in l$ and a varying point $q \in l$. Then $$C_8 \cdot D = c.$$ Curves in the class $C_8$ cover the locus of double covers of a line. Hence, if $c<0$, then the locus of double covers of a line have to be contained in the base locus. Note that since the locus of double covers of a line is contained in  both $\qqq[(1,1)^*]$ and $\qqq[(2)^*]$, any divisor containing the latter in the base locus also contains the locus of double covers. Hence, the locus of double covers is contained in the base locus of every effective divisor contained in the complement of the closed cone generated by $\Hh, \HH$ and $\Delta$. In particular, this locus is contained in the base locus of divisors contained in the region bounded by $\Hh, \HH$ and $P$ and in $c(\Hh \overline{P}) \cup c(\HH \overline{P})$.
\smallskip

We have verified that the loci described in the theorem are in the base locus of the corresponding divisors.  We will next show that the divisors listed in the theorem contain only the listed loci in their stable base locus.  The divisors $\Hh$, $\HH$ and $T$ are base-point-free (\cite{coskun:grass} \S 5). Hence, for divisors contained in the closed cone generated by $\Hh, \HH$ and $T$ the base locus is empty. Next, note that the base locus of the linear system $|P|$ is exactly the locus of double covers of a line. The rational map $p$ in the definition of $P$ is a morphism in the complement of the locus of double covers of a line. If the image of a map $f$ is a degree two curve in $G(k,n)$, then in the Pl\"{u}cker embedding of $G(k,n)$ the image spans a unique plane. In $\PP^{N-1}$, we can always find a codimension three linear space $\Gamma$ not intersecting $\Lambda$. Hence, $f$ is not in the indeterminacy locus of the map to $G(3,N)$ and there is a section of $\OO_{G(3,N)}(1)$ not containing the image of $f$. It follows that $f$ is not in the base locus of $|P|$. By the previous paragraph, the locus of degree two maps onto a line is in the base locus of $P$. We conclude that in the region bounded by  $P, \Hh$ and $\HH$ and in $c(\HH \overline{P}) \cup c(\Hh \overline{P})$ the stable base locus consists of the  locus of double covers of a line.  For a divisor contained in the region bounded by  $\dunb, P$ and $\Hh$ and in $c(P \overline{D}_{unb}) \cup c(\Hh  \overline{D}_{unb})$, the stable base locus must be contained in the stable base locus of $\dunb$ since every divisor in this region is a non-negative linear combination of $\dunb$ and base-point-free divisors. Similarly, for a divisor contained in the region bounded by $\ddeg, P $ and $\HH$ and in $c(P \overline{D}_{deg}) \cup c(\HH  \overline{D}_{deg})$, the base locus must be contained in the stable base locus $\ddeg$. In the region bounded by  $\ddeg, \dunb$ and $P$ and in $c(\dunb \ddeg)$, the base locus must be contained in the union of the stable base loci of $\ddeg$ and $\dunb$. The (stable) base locus of $\ddeg$ is $\qqq[(1,1)^*]$ and the (stable) base locus of $\dunb$ is $\qqq[(2)^*]$. The linear spaces parameterized by a degree two map to $G(k,n)$ span a linear space of dimension at most $k+2$. As long as they span a linear space of dimension $k+2$, then the projection from a general linear space of codimension $k+2$ still spans a linear space of dimension $k+2$, hence the corresponding map is not in the base locus of $\ddeg$. Similarly, as long as the intersection of all the linear spaces parameterized by the degree two map does not contain a $k-1$ dimensional linear space, then the map is not contained in the base locus of $\dunb$. Hence, the claims in parts (3), (4) and (5) of the theorem follow.  Similarly, in the region bounded by $\dunb, \Delta$ and $\Hh$ and in $c(\dunb \Delta)$, the base locus must be contained in the union of $\qqq[(2)^*]$ and the boundary divisor. In the region bounded by $\ddeg$, $\Delta$ and $\HH$ and in $c(\ddeg \Delta)$, the base locus must be contained in the union of $\qqq[(1,1)^*]$ and the boundary divisor. We conclude the equality in these two cases as well. Finally, in the region bounded by $\Delta, \Hh$ and $\HH$ the base locus has to be contained in the boundary divisor. Hence in the complement of the closed cone spanned by $\Hh, T$ and $\HH$ the base locus must equal the boundary divisor by the calculations above. This completes the proof of the theorem.
\end{proof}
\smallskip

Next,  we describe the birational models of $\Kgnb{0,0}(G(k,n),2)$ that correspond to the chambers in the decomposition.  For a big  rational  divisor class $D$, let $\phi_{D}$ denote the birational map $$ \phi_D: \Kgnb{0,0}(G(k,n), 2) \dashrightarrow \operatorname{Proj} (\oplus_{m\geq 0} (H^0(\OO(\lfloor mD \rfloor))).$$

\begin{proposition}
The Kontsevich moduli space $\Kgnb{0,0}(G(k,n),2)$ admits the following morphisms:
\begin{enumerate}
\item $\phi_{t \Hh + (1-t) \HH}$,  for $0 < t < 1$,  is a morphism from $\Kgnb{0,0}(G(k,n),2)$ to the normalization of the Chow variety, which is an isomorphism in the complement of $\qqq[(1)^*]$, the locus of double covers of a line in $G(k,n)$, and contracts  $\qqq[(1)^*]$ so that the locus of double covers with the same image line maps to a point. \smallskip

\item $\phi_{\Hh}$ and $\phi_{\HH}$ give two morphisms from $\Kgnb{0,0}(G(k,n),2)$ to two contractions of the normalization of the Chow variety, where $\phi_{\Hh}$ (resp., $\phi_{\HH}$), in addition to the double covers of a line, also contract the boundary divisor and $\qqq[(2)^*]$ (resp., $\qqq[(1,1)^*]$).  Any two maps $f, f'$ in the boundary for which the image is contained in the union of the same Schubert varieties  $\Sigma_{(2)^*}$ (resp., $\Sigma_{(1,1)^*}$) map to the same point under $\phi_{\Hh}$ (resp., $\phi_{\HH}$). Similarly, the stable maps in $\qqq[(2)^*]$ (resp., $\qqq[(1,1)^*]$) with image contained in a fixed Schubert variety $\Sigma_{(2)^*}$ (resp., $\Sigma_{(1,1)^*}$) map to the same point under $\phi_{\Hh}$ (resp., $\phi_{\HH}$).
\smallskip

\item If $D$ is in the domain bounded by $\Hh, \HH$ and $T$, then $D$ is ample and gives rise to an embedding of $\Kgnb{0,0}(G(k,n),2)$.
\end{enumerate}
\end{proposition}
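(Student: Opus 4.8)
The unifying principle throughout is supplied by Theorem \ref{summary}(2): for $d=2$ the term $v(D')$ is absent, since $\overline{M}_{0,2}/\mathfrak{S}_2$ contributes no nonzero divisor classes, so the NEF cone of $\Kgnb{0,0}(G(k,n),2)$ is exactly the closed cone spanned by $\Hh$, $\HH$ and $T$, and each generator is base-point-free. Consequently any class on a face of this cone is a non-negative combination of base-point-free classes, hence is itself base-point-free and semiample; the associated map $\phi_D$ is then a genuine morphism (given by $|mD|$ for $m\gg 0$) contracting precisely the curves $C$ with $D\cdot C=0$. I would feed into this the test curves $C_1,\dots,C_8$ already computed in the proof of Theorem \ref{de-two}. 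Because $\Hh$ and $\HH$ are NEF, an effective curve satisfies $D_t\cdot C=0$ for $D_t=t\Hh+(1-t)\HH$, $0<t<1$, if and only if $\Hh\cdot C=\HH\cdot C=0$; in particular the contracted locus is the same for every $t\in(0,1)$.

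Part (3) is then immediate from Kleiman's criterion: the interior of the NEF cone is the ample cone, so every $D$ in the open domain bounded by $\Hh,\HH,T$ is an ample $\QQ$-divisor on the ($\QQ$-factorial, projective) coarse space, and a sufficiently divisible multiple is very ample and yields a projective embedding.

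For Part (1), $D_t$ is base-point-free, so $\phi_{D_t}$ is a morphism, and by the remark above all $D_t$ with $0<t<1$ induce the same morphism as $\Hh+\HH$. To identify the target I would use the natural Chow morphism $c\colon \Kgnb{0,0}(G(k,n),2)\to \operatorname{Chow}$ sending a stable map to its image $1$-cycle in the Pl\"ucker embedding. Since the Chow polarization records incidence with codimension-two linear sections, whose class is $\sigma_1^2=\sigma_{1,1}+\sigma_2$ by Pieri, one gets $c^*\OO(1)=\pi_*e^*(\sigma_1^2)=\Hh+\HH$. As $\Kgnb{0,0}(G(k,n),2)$ is normal, Stein factorization of $c$ (equivalently, the fact that $\phi_{\Hh+\HH}$ maps with connected fibers onto the normal variety $\operatorname{Proj}\bigoplus_m H^0(m(\Hh+\HH))$) identifies the target of $\phi_{D_t}$ with the normalization of the Chow variety, with $c=\nu\circ\phi_{D_t}$. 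Finally, the family $C_8$ of double covers of a fixed line with one fixed and one moving branch point satisfies $\Hh\cdot C_8=\HH\cdot C_8=0$ and sweeps out $\qqq[(1)^*]$, so this locus is contracted; since $\qqq[(1)^*]$ contains exactly the maps with non-reduced image cycle $2\ell$, away from it the cycle is reduced and determines the stable map, whence $c$ is injective there and, being a bijective birational morphism onto a normal variety, an isomorphism by Zariski's main theorem.

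For Part (2), $\Hh$ and $\HH$ lie on extremal rays of the face $\langle \Hh,\HH\rangle$, so every curve contracted by $\phi_{D_t}$ is contracted by $\phi_{\Hh}$ and by $\phi_{\HH}$; hence both factor through $\phi_{D_t}$ and realize further contractions of the normalized Chow variety. Reading off the test curves, $\Hh\cdot C=0$ for $C=C_2,C_5$ (sweeping out $\qqq[(2)^*]$), for $C=C_7$ (whose deformations sweep out the boundary $\Delta$) and for $C_8$; symmetrically $\HH$ is trivial on $C_1,C_4$ (sweeping $\qqq[(1,1)^*]$), on $C_6$ (sweeping $\Delta$) and on $C_8$. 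This already shows $\phi_{\Hh}$ contracts $\Delta$, $\qqq[(2)^*]$ and the double covers, and $\phi_{\HH}$ contracts $\Delta$, $\qqq[(1,1)^*]$ and the double covers. The main obstacle is the precise description of the fibers: I must show that $\phi_{\Hh}$ \emph{neither over- nor under-contracts}, i.e.\ that two maps are identified exactly when their images lie in a common $\Sigma_{(2)^*}$ (and symmetrically for $\HH$ and $\Sigma_{(1,1)^*}$). Here I would use that $\Hh=\pi_*e^*\sigma_{1,1}$ records only the $\sigma_{1,1}$-incidence data of the image: connecting the claimed fibers by chains of the contracted test curves $C_2,C_5,C_7$ gives that maps in a common Schubert variety are identified, while exhibiting, for maps in different fibers, a translate of $\Hh$ containing one image but not the other gives the reverse separation. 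Verifying this separation carefully — rather than the cone-theoretic identification of the contractions, which is routine — is where the real work lies.
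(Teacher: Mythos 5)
Your proposal is correct and follows essentially the same route as the paper: base-point-freeness of the NEF cone generators $\Hh,\HH,T$, Kleiman's criterion for part (3), identification of $\Hh+\HH$ with the Chow polarization (the paper obtains this by pulling back the Hilbert--Chow morphism along the inclusion into $\Kgnb{0,0}(\PP^{N-1},2)$, you via $\sigma_1^2=\sigma_{1,1}+\sigma_2$, which is the same computation), and the test curves $C_1,\dots,C_8$ to identify the contracted loci. The only stylistic difference is in part (2), where the paper settles the fiber description by observing that the targets are normalizations of the Chow compactifications formed with respect to the cycle classes $\sigma_{1,1}$, $\sigma_2$ and $a\sigma_{1,1}+b\sigma_2$, which is the separation argument you flag as the remaining work.
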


\begin{proof}
By \cite{coskun:grass}, the NEF cone of $\Kgnb{0,0}(G(k,n),2)$, which coincides with the base-point-free cone,  is the closed cone spanned by $\Hh, \HH$ and $T$.  We, therefore, obtain morphisms for sufficiently high and divisible multiples of each of the rational divisors in this cone. The last part of the proposition follows by Kleiman's Theorem which asserts that the interior of the NEF cone is the ample cone. The curves in the class $C_8$ have intersection number zero with any divisor of the form $t\Hh + (1-t) \HH$. Since these curves cover the locus of double covers of a fixed line, we conclude that the maps obtained from these divisors contract the locus of double covers of a fixed line to a point.  The class $H$ of the divisor of maps whose image intersects a codimension two linear space  in  projective space gives rise to the Hilbert-Chow morphism on $\Kgnb{0,0}(\PP^{N-1},2)$. This morphism has image the normalization of the Chow variety and is an isomorphism away from the locus maps two-to-one onto their image. The Pl\"{u}cker embedding of $G(k,n)$ induces an inclusion of $\Kgnb{0,0}(G(k,n),2)$ in $\Kgnb{0,0}(\PP^{N-1},2)$. The pull-back of $H$ under this inclusion is $\Hh + \HH$. By symmetry, there is no loss of generality in assuming that $0<t \leq 1/2$. We can write $$t \Hh + (1-t) \HH = t ( \Hh + \HH)     + (1-2t) \HH.$$ Since $\HH$ is base-point-free, the first part of  the proposition follows. The cases of $\phi_{\Hh}$ and $\phi_{\HH}$ are almost identical, so we concentrate on $\phi_{\Hh}$. $\Hh$ has intersection number zero with the curve classes $C_5, C_7$ and $C_8$. Curves in the class $C_5$ cover the locus $\qqq[(2)^*]$. Curves in the class $C_7$ cover the boundary divisor and curves in the class $C_8$ cover $\qqq[(1)^*]$. We conclude that these loci are contracted by $\phi_{\Hh}$. Part (2) of the proposition follows from these considerations. We observe that the locus of degree two curves whose span does not lie in $G(k,n)$ admit three distinct Chow compactifications depending on whether one uses the codimension two class $\sigma_{1,1}$, $\sigma_2$ or $a \sigma_{1,1} + b \sigma_2$ with $a, b>0$. The three models are the normalization of these Chow compactifications.
\end{proof}
\smallskip

\begin{theorem}\label{T}
\begin{enumerate}
\item The birational model corresponding to the divisor $T$ is the space of weighted stable maps $\Kgnb{0,0}(G(k,n), 1, 1)$. $\phi_{T}$ is an isomorphism  away from the boundary divisor and contracts the locus of maps with reducible domain $f: C_1 \cup C_2 \rightarrow G(k,n)$  that have $f(C_1 \cap C_2) = p$ for some fixed $p \in G(k,n)$  to a point.
\smallskip

\item For $D \in c(\Hh T)$ or $D \in c(\HH T)$ the morphism $\phi_D$ is an isomorphism away from the boundary divisor. On the boundary divisor, for  $D \in c(\Hh T)$ (resp., in $c(\HH T)$) the morphism contracts the locus of line pairs that are contained in the same pair of intersecting  linear spaces with class $\Sigma_{n-k-1, \dots, n-k-1}$ (resp., $\Sigma_{n-k, \dots, n-k}$) to a point. These morphisms are flops of each other over $\phi_T$.

\end{enumerate}
\end{theorem}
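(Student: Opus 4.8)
We treat the two parts together, working on the two codimension-one faces of the NEF cone that contain the ray spanned by $T$. By Theorem \ref{summary}(2) the NEF cone of $\Kgnb{0,0}(G(k,n),2)$ is the closed cone on $\Hh,\HH$ and $T$, so $T$ spans an extremal ray and $c(\Hh T)$, $c(\HH T)$ are the two walls through it; every divisor on these walls is semiample and defines a morphism $\phi_D$. The plan is to read off the contracted curves by intersecting with the test families $C_4,\dots,C_8$ from the proof of Theorem \ref{de-two}, to recover the fibers of $\phi_D$ from this numerical data, and then to match the resulting contractions with the weighted stable map space and with a standard flop.

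\emph{Part (1).} Since $T$ is NEF and spans an extremal ray, $\phi_T$ contracts exactly the curves $C$ with $T\cdot C=0$. From the intersection numbers one has $T\cdot C_6 = T\cdot C_7 = 0$, while $T\cdot C_4 = T\cdot C_5 = 1$ and $T\cdot C_8 = \tfrac12$, so curves moving in the interior or transverse to $\Delta$ meet $T$ positively; one concludes that $\phi_T$ is an isomorphism off the boundary and contracts $\Delta$. To describe the fibers on $\Delta$ I would use the geometric meaning of the tangency divisor: a reducible conic $f\colon C_1\cup C_2\to G(k,n)$ is tangent to a general hyperplane section of $G(k,n)$ precisely when its node $f(C_1\cap C_2)$ lies on that section, so $T|_\Delta$ depends only on the image of the node. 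Hence $\phi_T|_\Delta$ factors through the node-evaluation map $\Delta\to G(k,n)$ and contracts the locus of maps with a fixed node image to a point, as claimed. Finally I would identify the image with the weighted stable map space $\Kgnb{0,0}(G(k,n),1,1)$ of \cite{mustata:kontsevich}: the reduction morphism from degree-two stable maps to these weighted maps has exactly this exceptional behaviour, and checking that $T$ is the pull-back of the natural polarization identifies $\phi_T$ with it.

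\emph{Part (2).} Fix $D=a\Hh+bT$ with $a,b>0$. Because $\Hh$ and $T$ are both NEF, $D\cdot C=0$ forces $\Hh\cdot C=T\cdot C=0$; since $T$ contracts only curves inside $\Delta$, the morphism $\phi_D$ is an isomorphism away from $\Delta$. Intersecting with the test curves gives $\Hh\cdot C_7=T\cdot C_7=0$ but $\Hh\cdot C_6=1$, so $\phi_D$ contracts the locus swept by $C_7$ and leaves the $C_6$-locus uncontracted; I would identify the $C_7$-locus as the family of line pairs lying in a common pair of intersecting linear spaces of class $\Sigma_{n-k-1,\dots,n-k-1}$, the $k$-planes contained in a fixed $(k+1)$-plane. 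The case $D\in c(\HH T)$ is symmetric, with the roles of $C_6$ and $C_7$ interchanged and the dual linear-space class $\Sigma_{n-k,\dots,n-k}$. Both contracted loci sit inside $\Delta$ as proper subvarieties, hence have codimension at least two, so $\phi_{\Hh T}$ and $\phi_{\HH T}$ are small; as they contract subsets of the $\phi_T$-contracted locus they factor through $\phi_T$, giving two small morphisms over $Z:=\phi_T(\Kgnb{0,0}(G(k,n),2))$.

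\emph{The flop and the main obstacle.} The induced birational map between the two models over $Z$ is an isomorphism in codimension one, each being an isomorphism off its codimension-$\geq 2$ contracted locus, and $T$, being pulled back from $Z$, is trivial on every flopping curve on both sides; meanwhile crossing the ray $T$ reverses their sign, since $\Hh\cdot C_6=1,\ \Hh\cdot C_7=0$ while $\HH\cdot C_6=0,\ \HH\cdot C_7=1$. This is exactly the defining configuration of a flop, so the two models are flops of each other over $Z$. I expect the delicate step to be the precise identification of the two contracted loci together with the verification that they have codimension at least two, equivalently that the two intermediate contractions are small rather than divisorial. Here the reduction to $G(2,4)$ via the morphism $\phi$ is the key tool: the local model is the cone over a quadric surface, whose two rulings—recorded by the $(k-1)$-plane and the $(k+1)$-plane cut out by a line in $G(k,n)$—give the two small resolutions exchanged by the flop.
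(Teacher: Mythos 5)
Your strategy is the paper's own: intersect with the boundary test curves $C_6, C_7$, read off which are contracted on each wall of the NEF cone, and deduce the flop from the fact that $T$ is trivial while $\Hh$ and $\HH$ change roles across the ray spanned by $T$; your treatment of part (1) via the tangency interpretation of $T|_{\Delta}$ is in fact more detailed than the paper's. But part (2) has a genuine gap. You claim $\phi_D$ is small for $D\in c(\Hh T)$ because ``both contracted loci sit inside $\Delta$ as proper subvarieties, hence have codimension at least two.'' This confuses an individual fiber with the exceptional locus: through \emph{every} point of $\Delta$ there passes a curve numerically equivalent to $C_7$ (the paper uses precisely the fact that deformations of $C_6$ and $C_7$ cover $\Delta$ in the proof of Theorem \ref{de-two}), and every such curve is $D$-trivial for $D\in c(\Hh T)$. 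Hence the exceptional locus of $\phi_D$ is all of $\Delta$ and $\phi_D$ is a divisorial contraction, not a small one. What is small -- and what the paper actually asserts -- is the induced morphism from the model $\phi_D(\Kgnb{0,0}(G(k,n),2))$ down to the image of $\phi_T$; its exceptional locus is $\phi_D(\Delta)$, which has codimension $1+2(n-k-1)\geq 3$ because $\phi_D$ has already collapsed $\Delta$ fiberwise by $2(n-k-1)$ dimensions. Your flop argument should be run for these two intermediate contractions over $\phi_T$, and note also that ``they factor through $\phi_T$'' is backwards: since $\phi_D$ contracts fewer curves than $\phi_T$, it is $\phi_T$ that factors through $\phi_D$.

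There is also a problem with your geometric identification of the contracted locus. A pencil of lines in $\Sigma_{(2)^*}=\{W: F_{k-1}\subset W\subset F_{k+2}\}$ consists of pencils of $k$-planes with a \emph{common vertex} $F_{k-1}$ and varying ambient $(k+1)$-plane; so the locus swept out by deformations of $C_7$ (with the node fixed, as forced by $T$-triviality) is the set of line pairs whose two vertices $A_1,A_2$ are fixed, i.e.\ pairs contained in a fixed pair of Schubert varieties $\{W\supset A_i\}$ of class $\Sigma_{n-k,\dots,n-k}$ -- not ``$k$-planes contained in a fixed $(k+1)$-plane,'' which is $\Sigma_{n-k-1,\dots,n-k-1}$ and is the locus swept by the $C_6$-curves. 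You should be aware that the printed statement of Theorem \ref{T}(2), and the paper's own one-line proof (which assigns $C_6$ to $c(\overline{\Hh}\,\overline{T})$), appear to interchange these two labels relative to the intersection numbers $C_6\cdot\Hh=1$, $C_7\cdot\Hh=0$ computed in the proof of Theorem \ref{de-two} and relative to the description of $\phi_{\Hh}$ and $\phi_{\HH}$ in the preceding proposition; as written, you have reproduced the printed conclusion only by pairing the correct numerics with a description of the $C_7$-locus that does not follow from the geometry of $C_7$. Either fix the labels throughout or flag the discrepancy, but do not leave the identification unjustified.
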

\smallskip

\begin{proof}
The curves in the class $C_6$ (respectively, $C_7$) have intersection number zero with a divisor class $D$ in $c(\overline{\Hh} \overline{T})$ (respectively, with $D$ in $c(\overline{\HH} \overline{T})$). The descriptions of the morphisms follow easily noting that $\phi_D$ contracts these curves. Note that the further contractions to the image of $\phi_T$ are small contractions. It is easy to check that they are flopping contractions.
\end{proof}
\smallskip

For the next lemma and theorem, we assume that the target is $G(2,4)$.
\smallskip

\begin{lemma}
Let $OG_{\sigma_{1,1}}(G(2,4))$ and $OG_{\sigma_2}(G(2,4))$ denote the two connected components of the orthogonal Grassmannian $OG(3,6)$ parametrizing projective planes contained  in the Pl\"{u}cker embedding of $G(2,4)$. Then the Hilbert scheme $Hilb_{2x-1}(G(2,4))$ corresponding to the Hilbert polynomial $2x-1$ is isomorphic to the blow-up of $G(3,6)$ along $OG(3,6)$.  The blow-down morphism $$\pi: Hilb_{2x-1}(G(2,4)) \rightarrow G(3,6)$$ factors through $$\pi_{1,1} : Hilb_{2x-1}(G(2,4)) \rightarrow Bl_{OG_{\sigma_{1,1}}}G(3,6)$$ and $$\pi_{2} : Hilb_{2x-1}(G(2,4)) \rightarrow Bl_{OG_{\sigma_{2}}}G(3,6).$$
\end{lemma}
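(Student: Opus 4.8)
The plan is to realize the Hilbert scheme directly as a graph closure inside a projective bundle over $G(3,6)$ and then recognize that closure as a blow-up via the standard description of blow-ups along the zero locus of a regular section. Write $W$ for the $6$-dimensional vector space with $G(2,4)=Q\subset \PP(W)=\PP^5$ the smooth quadric cut out by the Pl\"ucker form $q\in \operatorname{Sym}^2 W^*$. A subscheme of $G(2,4)$ with the Hilbert polynomial of a conic is a plane conic, so it spans a unique plane $\Pi=\PP(A)$ with $[A]\in G(3,6)$ and is defined on $\Pi$ by a well-defined (up to scalar) quadratic form, i.e. a point of $\PP(\operatorname{Sym}^2 A^*)$. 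Letting $\mathcal{S}$ be the tautological rank-$3$ subbundle on $G(3,6)$ and $\mathcal{E}=\operatorname{Sym}^2\mathcal{S}^*$, with $\PP(\mathcal{E})$ the bundle of one-dimensional subspaces of the fibers of $\mathcal{E}$, this assignment defines a morphism $\Hilb_{2x-1}(G(2,4))\to \PP(\mathcal{E})$. The first step is to check this is a closed immersion, identifying the Hilbert scheme with its image $\Gamma\subset\PP(\mathcal{E})$.

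Next I would exhibit $\Gamma$ as a graph closure. Restricting $q$ to the tautological subbundle gives a global section $s\in H^0(G(3,6),\mathcal{E})$ with value $q|_A$ at $[A]$. Over the locus where $\Pi\not\subset Q$ — equivalently $s([A])\neq 0$ — the unique conic of $Q$ spanning $\Pi$ is $\Pi\cap Q=[q|_A]$, so there $\Gamma$ is exactly the graph of the rational section $[s]\colon [A]\mapsto [q|_A]$; over the complementary locus, where $s$ vanishes, every conic in $\Pi$ lies in $Q$, and the corresponding fiber of $\Hilb_{2x-1}(G(2,4))$ is the full $\PP(\mathcal{E}_{[A]})=\PP^5$. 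The zero scheme $Z(s)$ is the locus of planes contained in $Q$, i.e. the orthogonal Grassmannian $OG(3,6)$ of isotropic $3$-planes. Since $\operatorname{rk}\mathcal{E}=6$ equals the codimension of $OG(3,6)$ in $G(3,6)$ and $OG(3,6)$ is smooth of dimension $3$, the section $s$ is transverse (with $N_{OG(3,6)/G(3,6)}\cong \mathcal{E}|_{OG(3,6)}$), hence regular, and $Z(s)=OG(3,6)$ with its reduced structure.

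I then invoke the standard fact that, for a regular section $s$ of a vector bundle $\mathcal{E}$ on a smooth variety $X$, the closure in $\PP(\mathcal{E})$ of the graph of $[s]$ over $X\setminus Z(s)$ is canonically isomorphic to $\operatorname{Bl}_{Z(s)}X$, with exceptional divisor $\PP(\mathcal{E}|_{Z(s)})=\PP(N_{Z(s)/X})$; in local coordinates where $s=(s_1,\dots,s_6)$ is a regular sequence, this is the usual equational description $\{y_i s_j=y_j s_i\}$ of the blow-up of the ideal $(s_1,\dots,s_6)$. In particular the exceptional fibers are $\PP^5$, matching the fibers of $\Hilb_{2x-1}(G(2,4))$ over planes contained in $Q$, which also shows $\Gamma$ equals the set-theoretic union of the graph and these full fibers. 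Applying this with $X=G(3,6)$ identifies $\Gamma$, and hence $\Hilb_{2x-1}(G(2,4))$, with $\operatorname{Bl}_{OG(3,6)}G(3,6)$, and $\pi$ with the blow-down map. Finally, because $OG(3,6)=OG_{\sigma_{1,1}}\sqcup OG_{\sigma_2}$ is a disjoint union of two smooth components, blowing them up commutes: $\operatorname{Bl}_{OG(3,6)}G(3,6)=\operatorname{Bl}_{OG_{\sigma_2}}(\operatorname{Bl}_{OG_{\sigma_{1,1}}}G(3,6))$, and the two intermediate partial blow-ups are precisely $Bl_{OG_{\sigma_{1,1}}}G(3,6)$ and $Bl_{OG_{\sigma_2}}G(3,6)$, yielding the factorizations through $\pi_{1,1}$ and $\pi_2$.

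The main obstacle is the first step: controlling the Hilbert scheme itself, i.e. verifying that $\Hilb_{2x-1}(G(2,4))$ is reduced and irreducible with every point a genuine plane conic, so that the set-theoretic graph picture is an isomorphism of schemes rather than merely a bijection. I would handle this by a tangent-space computation at each type of conic (smooth conic, line-pair, and double line, both contained in and transverse to $Q$), comparing $\dim T_{[C]}\Hilb_{2x-1}(G(2,4))$ with $\dim\Gamma=9$; since $\Gamma$ is smooth, being a blow-up of the smooth variety $G(3,6)$ along the smooth center $OG(3,6)$, this forces the natural morphism onto $\Gamma$ to be an isomorphism.
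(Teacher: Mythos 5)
Your proposal is correct and follows essentially the same route as the paper: both realize $\Hilb_{2x-1}(G(2,4))$ inside $\PP(\operatorname{Sym}^2 S^*)\to G(3,6)$ (the Hilbert scheme of conics in $\PP^5$), observe that $OG(3,6)$ is the zero locus of the induced section of $\operatorname{Sym}^2 S^*$ with normal bundle $\operatorname{Sym}^2 S^*|_{OG(3,6)}$, and read off the blow-up structure from the fibers over and away from $OG(3,6)$, with the factorization coming from the two connected components. Your write-up is somewhat more careful than the paper's, since you make explicit the regular-section/graph-closure mechanism behind ``it follows that this is the blow-up'' and you flag the scheme-theoretic identification of the Hilbert scheme (reducedness via tangent-space computations) that the paper passes over in silence.
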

\smallskip

\begin{proof}
Consider the universal family  $I \subset G(3,6) \times \PP^5$ over the Grassmannian admitting two natural projections $\phi_1$ and $\phi_2$ to $G(3,6)$ and $\PP^5$, respectively. The bundle $\phi_{1*} \phi_2^* \OO_{\PP^5}(2)$ is naturally identified with $Sym^2 S^*$. Since $OG(3,6)$ is defined by the vanishing of a general section of $\phi_{1*} \phi_2^* \OO_{\PP^5}(2)$ , we can identify the normal bundle of $OG(3,6)$ at a point $\Lambda$ of $OG(3,6)$ with $Sym^2 S^*|_{\Lambda}$.
$Hilb_{2x-1}(\PP^5)$ is naturally identified with $\PP (Sym^2 (S^*)) \rightarrow G(3,6).$ Then $Hilb_{2x-1}(G(2,4))$ is then given by $$\{ ([C], [\Lambda]) \  | \  [\Lambda] \in G(3,6), C \subset \Lambda \cap G(2,4),  [C] \in Hilb_{2x-1}(\Lambda) \}.$$ The projection to $G(3,6)$ is clearly an isomorphism away from $OG(3,6)$. Over $OG(3,6)$ the fiber of the Hilbert scheme is identified with the projectivization of $Sym^2 S^*$. It follows that $Hilb_{2x-1}(G(2,4))$ is isomorphic to the blow-up of $G(3,6)$ along $OG(3,6)$. Since $OG(3,6)$ has two connected components, this leads to two exceptional divisors that can be blown-down independently. The Lemma follows from these considerations.
\end{proof}
\smallskip

\begin{theorem}\label{chow}
The rational maps corresponding to the divisors $D$ in the cone generated by $\Hh, \HH$ and $P$ are as follows.
\begin{enumerate}
\item Let $0 < t <1$.  The Hilbert scheme $Hilb_{2x-1}(G(2,4))$ is the flip of $\Kgnb{0,0}(G(2,4),2)$ over the Chow variety $Chow_{t \Hh + (1-t)\HH}$.  $\phi_D: \Kgnb{0,0}(G(2,4),2) \dashrightarrow Hilb_{2x-1}(G(2,4))$ for $D $ in the domain bounded by $\Hh, \HH$ and $P$.
\smallskip

\item For $D \in c(\Hh P)$, $\phi_D : \Kgnb{0,0}(G(2,4),2) \dashrightarrow Bl_{OG_{\sigma_{1,1}}}G(3,6)$.
\smallskip

\item For $D \in c( \HH P )$, $\phi_D : \Kgnb{0,0}(G(2,4),2) \dashrightarrow Bl_{OG_{\sigma_{2}}}G(3,6)$.
\smallskip

\item $\phi_P : \Kgnb{0,0}(G(2,4),2) \dashrightarrow G(3,6)$.

\end{enumerate}
\end{theorem}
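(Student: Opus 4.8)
The plan is to read off all four models from the single span map $p \colon \Kgnb{0,0}(G(2,4),2) \dashrightarrow G(3,6)$, which sends a stable map to the plane it spans, and from its resolution by the Hilbert scheme. I would first treat part (4). By construction $P = p^{*}(\OO_{G(3,6)}(1))$, so $\phi_{P}$ is the map induced by $p$. A general plane $\Lambda \in G(3,6)$ meets the quadric fourfold $G(2,4)\subset \PP^{5}$ in a single smooth conic, and a smooth conic carries a unique stable-map structure; hence $p$ is birational onto $G(3,6)$. Since $G(3,6)$ is normal and $\OO(1)$ is ample on it, $\operatorname{Proj}(\oplus_{m\ge 0}H^{0}(\OO(mP)))=G(3,6)$ and $\phi_{P}=p$, which is (4).

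For the remaining parts I would introduce the birational map $h\colon \Kgnb{0,0}(G(2,4),2)\dashrightarrow Hilb_{2x-1}(G(2,4))$ sending a stable map to its image conic. On the open set of maps whose image is reduced (a smooth conic or a pair of distinct lines) the map and its image recover one another, so $h$ is an isomorphism there. The step I expect to be the main obstacle is to verify that the two loci where $h$ and $h^{-1}$ are undefined each have codimension at least two: on the Kontsevich side this is $\qqq[(1)^{*}]$, the double covers of a line, of dimension $7$ in the $9$-dimensional space; on the Hilbert side it is the locus of double lines $2\ell\subset G(2,4)$, which lies over the codimension-three locus of planes meeting $G(2,4)$ in a double line together with the exceptional fibers over $OG(3,6)$. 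Granting this, $h$ is an isomorphism in codimension one, so it identifies the two Néron-Severi spaces, matching $P$ with $\pi^{*}\OO_{G(3,6)}(1)$ and the divisors $\qqq[(1,1)^{*}]$, $\qqq[(2)^{*}]$ with the exceptional divisors $E_{\sigma_{1,1}}$, $E_{\sigma_{2}}$ of $\pi\colon Hilb_{2x-1}(G(2,4))=Bl_{OG(3,6)}G(3,6)\to G(3,6)$ supplied by the preceding Lemma.

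To prove (1) I would note that both spaces admit small contractions to the normalization of the Chow variety: $\Kgnb{0,0}(G(2,4),2)$ by $\phi_{t\Hh+(1-t)\HH}$ of the previous Proposition, with exceptional locus $\qqq[(1)^{*}]$, and $Hilb_{2x-1}(G(2,4))$ by the cycle map, with exceptional locus the double lines. Transporting the ample cone of $Bl_{OG(3,6)}G(3,6)$, namely $\{\pi^{*}\OO(1)-aE_{\sigma_{1,1}}-bE_{\sigma_{2}}\}$ for small $a,b>0$, through the identifications $P=\pi^{*}\OO(1)$, $\qqq[(1,1)^{*}]=E_{\sigma_{1,1}}$, $\qqq[(2)^{*}]=E_{\sigma_{2}}$, gives exactly the interior of the cone generated by $\Hh,\HH$ and $P$. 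Hence for $D$ in this interior $h_{*}D$ is ample, and since $h$ preserves every $H^{0}(\OO(mD))$ we get $\phi_{D}=h$ with image $Hilb_{2x-1}(G(2,4))$; uniqueness of flips then exhibits $Hilb_{2x-1}(G(2,4))$ as the flip of $\Kgnb{0,0}(G(2,4),2)$ over $Chow_{t\Hh+(1-t)\HH}$.

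Finally, for (2) and (3) I would locate the two walls using the test curves of Lemma \ref{p-two}. Because $C_{2}\cdot \Hh=C_{2}\cdot P=0$, every $D\in c(\Hh P)$ satisfies $C_{2}\cdot D=0$; as the curves $C_{2}$ sweep out $\qqq[(2)^{*}]$, the map $\phi_{D}$ contracts $\qqq[(2)^{*}]=E_{\sigma_{2}}$ while preserving $\qqq[(1,1)^{*}]=E_{\sigma_{1,1}}$. This is precisely the factorization $\pi_{1,1}\colon Hilb_{2x-1}(G(2,4))\to Bl_{OG_{\sigma_{1,1}}}G(3,6)$ of the Lemma, so $\phi_{D}\colon \Kgnb{0,0}(G(2,4),2)\dashrightarrow Bl_{OG_{\sigma_{1,1}}}G(3,6)$, which is (2). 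Symmetrically, $C_{1}\cdot\HH=C_{1}\cdot P=0$ forces $\phi_{D}$ to contract $\qqq[(1,1)^{*}]=E_{\sigma_{1,1}}$ for $D\in c(\HH P)$, and $\pi_{2}$ gives $\phi_{D}\colon \Kgnb{0,0}(G(2,4),2)\dashrightarrow Bl_{OG_{\sigma_{2}}}G(3,6)$, which is (3).
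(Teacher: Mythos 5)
Your proposal reaches the correct conclusions and rests on the same two pillars as the paper's argument --- the preceding Lemma identifying $Hilb_{2x-1}(G(2,4))$ with the blow-up of $G(3,6)$ along $OG(3,6)$ together with its partial blow-downs $\pi_{1,1}$ and $\pi_{2}$, and the fact that $\Kgnb{0,0}(G(2,4),2)$ and $Hilb_{2x-1}(G(2,4))$ agree away from loci of codimension at least two --- but it certifies the flip by a different mechanism. The paper introduces the incidence correspondence of triples $(C,C^{*},\Lambda)$ dominating both spaces, observes that both projections to the Chow variety are small contractions whose fibers over a double line are $\PP^{2}=\operatorname{Sym}^{2}(\PP^{1})$ on the Kontsevich side and $\PP^{1}$ on the Hilbert side, and then simply checks that $-D$ is ample on the first kind of fiber and $D$ on the second for $D$ in the open cone; this is the definition of the flip and requires only ampleness statements on explicit projective spaces. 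You instead transport the ample cone of the blow-up through the codimension-one identification and invoke uniqueness of flips. That route can be made to work, but the one step you assert rather than prove is that the ample cone of the blow-up corresponds exactly to the interior of the cone spanned by $\Hh$, $\HH$ and $P$: the description ``$\pi^{*}\OO(1)-aE_{\sigma_{1,1}}-bE_{\sigma_{2}}$ for small $a,b>0$'' is not a cone, and pinning down the actual nef cone requires identifying the extremal rays of the Mori cone of the blow-up (the fibers of the two exceptional divisors and the flipped $\PP^{1}$'s over double lines) --- precisely the computation that the paper's fiber-wise ampleness check is designed to sidestep. Your treatment of parts (2)--(4) via the test curves $C_{1}$, $C_{2}$ of Lemma \ref{p-two} is correct and in fact more explicit than the paper's closing sentence, and your part (4) is sound once the section rings of $P$ on the two birational models are identified, which your codimension count supplies.
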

\smallskip

\begin{proof}
Consider the incidence correspondence consisting of triples $(C, C^*, \Lambda)$, where $\Lambda$ is a plane in $\PP^5$, $C$ is a connected, arithmetic genus zero, degree two curve in $G(2,4) \cap \Lambda$ and $C^*$ is a dual conic of $C$ in $\Lambda$. This incidence correspondence admits a map both to $\Kgnb{0,0}(G(2,4),2)$ and to $Hilb_{2x-1}(G(2,4))$ by projection to the first two and by projection to the first and third factors, respectively. The projection to the first factor gives a morphism to the Chow variety. Note that this projection is an isomorphism away from the locus where $C$ is supported on a line. The morphism to the Chow variety is a small contraction in the case of both the Hilbert scheme and the Kontsevich moduli space. The fiber over a point corresponding to a double line in the morphism from the Hilbert scheme to the Chow variety is isomorphic to $\PP^1$ corresponding to the choice of plane $\Lambda$ everywhere tangent to the Pl\"{u}cker embedding of $G(2,4)$ in $\PP^5$. The fiber over a point corresponding to a double line in the morphism from $\Kgnb{0,0}(G(2,4),2)$ to the Chow variety is isomorphic to $\PP^2= Sym^2 (\PP^1)$ corresponding to double covers of $\PP^1$.  Note in both the Hilbert scheme and the Kontsevich moduli space the morphisms to the Chow variety are small contractions. The locus of double lines in the Hilbert scheme (respectively, in $\Kgnb{0,0}(G(2,4),2)$) has codimension 3 (respectively, 2).
Finally, note that for $D$ in the domain bounded by $\Hh, \HH$ and $P$, $-D$ is ample on the fibers of the projection of $\Kgnb{0,0}(G(2,4),2)$ to the Chow variety and $D$ is ample on the fibers of the projection of the Hilbert scheme to the Chow variety. We conclude that $Hilb_{2x-1}(G(2,4))$ is the flip of $\Kgnb{0,0}(G(2,4), 2)$ over the Chow variety.
The rest of the Theorem follows from the previous lemma and the definition of $P$.
\end{proof}
\smallskip

\begin{remark}
For $G(k,n)$ with $(k,n) \not= (2,4)$, the flip of $\Kgnb{0,0}(G(k,n),2)$ corresponding to a divisor $D$ in the domain bounded by $F, \Hh$ and $\HH$ is no longer the Hilbert scheme, but a divisorial contraction of the Hilbert scheme.

\end{remark}

\bigskip

\section{Degree three maps to Grassmannians} \label{sec-three}
Let $3 \leq k < k+3 \leq n$. In this section,  we study the stable base locus decomposition of $\Kgnb{0,0}(G(k,n),3)$. We begin by introducing the divisor classes that will play an important role in our discussion.
The Neron-Severi space is spanned by the divisors $\Hh, \HH$ and $\Delta=\Delta_{1,2}$. In this basis, the divisors $\ddeg, \dunb$ and $T$ have the following expressions (see \S 4 and 5 of \cite{coskun:grass}):

\begin{eqnarray*}
T &=& \frac{2}{3}( \Hh +  \HH + \Delta) \\
D_{deg} &=& \frac{1}{3} (- \Hh + 2 \HH - \Delta) \\
D_{unb} &=& \frac{1}{3} ( 2 \Hh - \HH - \Delta)
\end{eqnarray*}

\begin{definition}
Let $N= {n \choose k}-1$. The Pl\"{u}cker map embeds $G(k,n)$ in $\PP^N$.
Let $\ccc[\lambda]$ denote the closure of the locus of maps $f$ in $\Kgnb{0,0}(G(k,n),3)$ with irreducible domain such that the map $f$ factors through the inclusion of some Schubert variety $\Sigma_{\lambda}$ in $G(k,n)$. Let $\qqq(\lambda)\lll[\mu]$ denote the closure of the locus of maps with reducible domains $C_1 \cup C_2$ such that $f$ restricts to a degree one map on $C_1$ and a degree two map on $C_2$ such that the image of $f|_{C_2}$ is contained in some Schubert variety $\Sigma_{\lambda}$ and the entire image of $f$ is contained in some Schubert variety $\Sigma_{\mu}$.
\end{definition}

\begin{example}
Let $f \in \Kgnb{0,0}(G(k,n),3)$ be a general stable map. Then the linear spaces parameterized by the image of $f$ sweep out a cubic scroll $S_{0, \dots, 0, 1, 1, 1}$. Such a scroll is a cone over the Segre embedding of $\PP^1 \times \PP^2$. In particular, every stable map in $\Kgnb{0,0}(G(k,n),3)$ lies in a Schubert variety of the form $\Sigma_{(3,2,1)^*}$.  Hence, $\ccc[(3,2,1)^*] = \Kgnb{0,0}(G(k,n),3)$.
$\ccc[(1)^*]$ is the locus of maps in $\Kgnb{0,0}(G(k,n),3)$ that are triple covers of a line in $G(k,n)$. $\qqq\lll[(3,2,1)^*]$ is the boundary divisor.
\end{example}

We begin by introducing two divisor classes $P$ and $F$ that are pull-backs of divisors by the natural morphism $$i: \Kgnb{0,0}(G(k,n), 3) \rightarrow \Kgnb{0,0}(\PP^N,3).$$ In each case, we will calculate the divisor class and determine its base locus.
\smallskip

\noindent {\bf The class $P$.} The image of a map of degree three spans a linear space of dimension three or less in the Pl\"{u}cker embedding of $G(k,n)$. Consider the Zariski open set $U$ in $\Kgnb{0,0}(G(k,n),3)$ where the linear span of the image of $f$ is $\PP^3$.  Let $P$ denote the class of the closure (in $\Kgnb{0,0}(G(k,n),3)$) of the locus in $U$ where the span of $f$ intersects a fixed $\PP^{N-4}$.

\begin{lemma}\label{disc-P}
The class $P$ is given by
$$P = \frac{1}{3} (2\Hh + 2\HH - \Delta).$$ The stable base locus of $P$ consists of $\ccc[(1,1)^*]\cup \ccc[(2)^*] \cup \qqq((1)^*)\lll$.
\end{lemma}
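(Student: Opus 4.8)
The plan is to prove the two assertions separately: first the numerical computation of the class $P$, then the identification of its stable base locus.

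\emph{The class.} I would compute $P = \frac{1}{3}(2\Hh + 2\HH - \Delta)$ by the method of test curves, exactly as in the proof of Lemma \ref{p-two}. Since the Neron-Severi space is spanned by $\Hh, \HH, \Delta$, three curves suffice. I would take $C_1$ a general pencil of rational cubics lying in a fixed $\PP^3$ of type $\Sigma_{(1,1,1)^*}$, $C_2$ the analogous pencil in a fixed $\PP^3$ of type $\Sigma_{(3)^*}$, and $C_3$ a pencil whose members span a moving $\PP^3$ inside a general three-dimensional linear section of $G(k,n)$ in its Pl\"ucker embedding. The intersections with $\Hh$ and $\HH$ are read off from the Schubert class $a\sigma_{(1,1)^*} + b\sigma_{(2)^*}$ of the surface swept out by each family (so that $C\cdot\Hh = a$, $C\cdot\HH = b$), the intersection with $\Delta$ is the number of members with reducible domain, and the intersection with $P$ is computed geometrically: on $C_1$ and $C_2$ the spanning $\PP^3$ is constant, so $p$ is constant and $C_i \cdot P = 0$, whereas on $C_3$ the spans trace a degree-one family of $\PP^3$'s, giving $C_3 \cdot P = 1$. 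Inverting the resulting $3\times 3$ system yields the stated class. As a consistency check I would use that $P = i^*\bar{P}$ for the analogous span-incidence divisor $\bar{P}$ on $\Kgnb{0,0}(\PP^N,3)$ and that the Pl\"ucker hyperplane class pulls back to $\Hh + \HH$.

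\emph{Upper bound for the base locus.} The map $p$ is a morphism on the open locus $U$ where the image spans a genuine $\PP^3$. For $f\in U$ the span is a fixed $3$-plane, and since a general $\PP^{N-4}$ is disjoint from it, the pullback of the corresponding Schubert hyperplane of $G(4,N+1)$ is an effective divisor in $|P|$ not containing $f$. Hence the stable base locus is contained in the complement of $U$, namely the locus of stable maps whose image spans at most a $\PP^2$. I would then classify this degenerate-span locus: an irreducible degree-three map with planar image lies in a $\PP^2 \subset G(k,n)$, and every such plane is of type $\Sigma_{(1,1)^*}$ or $\Sigma_{(2)^*}$ (the two families of maximal planes in the Grassmannian), placing the map in $\ccc[(1,1)^*] \cup \ccc[(2)^*]$; a reducible map of degenerate span whose degree-two component is a double cover of a line contributes $\qqq((1)^*)\lll$. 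This exhausts the complement of $U$.

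\emph{Lower bound for the base locus.} For the reverse containment I would cover each of the three loci by curves $C$ with $C\cdot P < 0$, computing the intersection from the formula $C\cdot P = \frac{1}{3}\bigl(2(C\cdot\Hh) + 2(C\cdot\HH) - C\cdot\Delta\bigr)$, so that negativity amounts to $C\cdot\Delta > 2(C\cdot\Hh + C\cdot\HH)$. For $\qqq((1)^*)\lll$ this is clean: fixing the line component and the double-covered line while varying the branch divisor of the degree-two component gives a family of the type of $C_8$ in Theorem \ref{de-two}, with $C\cdot\Hh = C\cdot\HH = 0$ and $C\cdot\Delta > 0$, hence $C\cdot P < 0$; letting the remaining data vary shows these curves sweep out the locus. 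For $\ccc[(1,1)^*]$ and $\ccc[(2)^*]$ I would construct analogous covering families inside a fixed plane $\Sigma_{(1,1)^*}$ or $\Sigma_{(2)^*}$ whose members acquire enough reducible (line-plus-conic) degenerations to push $C\cdot\Delta$ above the required bound.

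\emph{Main obstacle.} The delicate points are all in the lower bound. Producing covering families of the \emph{irreducible} loci $\ccc[(1,1)^*]$ and $\ccc[(2)^*]$ with sufficiently many boundary members is the crux: a general pencil of plane cubics meets the reducible locus (codimension two in the $\PP^9$ of cubics) too rarely and gives positive $P$-degree, so a carefully chosen non-generic family is needed. Equally important is the geometric input that $\qqq((1)^*)\lll$ is a genuine third component and not absorbed into $\ccc[(1,1)^*]\cup\ccc[(2)^*]$: two lines of $G(k,n)$ meeting at a point span a $\PP^2$ in $\PP^N$ that need not lie in $G(k,n)$. Verifying both of these, together with the claim that the chosen curves actually cover the full loci, is where the real work lies.
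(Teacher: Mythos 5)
Your overall architecture is the same as the paper's: test curves against $\Hh,\HH,\Delta$ to pin down the class, the observation that $P$ pulls back an ample class under the span map so its base locus sits inside the indeterminacy locus, the quadric-generation argument to identify that locus with $\ccc[(1,1)^*]\cup\ccc[(2)^*]\cup\qqq((1)^*)\lll$, and covering curves of negative $P$-degree for the reverse inclusion. The upper bound and the $\qqq((1)^*)\lll$ part of the lower bound are complete and match the paper essentially verbatim.

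The genuine gap is exactly the one you flag and then leave open: you never produce the covering families of $\ccc[(1,1)^*]$ and $\ccc[(2)^*]$ with $C\cdot\Delta>2(C\cdot\Hh+C\cdot\HH)$, and without them the lower bound for these two loci is not proved. The construction the paper uses is a pencil of plane cubics in a fixed $\Sigma_{(1,1)^*}$ (resp.\ $\Sigma_{(2)^*}$) that are singular at a fixed point $p$ and pass through $5$ further general points: imposing the node is three linear conditions, so this is an honest pencil whose general member is an irreducible nodal (hence rational) cubic, and whose reducible members are the $5$ unions of the line $\overline{pq_i}$ with the conic through $p$ and the remaining four base points. This gives $B_4\cdot\Hh=1$, $B_4\cdot\HH=0$, $B_4\cdot\Delta=5$, hence $B_4\cdot P=-1$, and varying $p$, the base points, and the plane sweeps out $\ccc[(1,1)^*]$. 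Your diagnosis that a generic pencil fails is correct (a generic pencil of rational cubics is not even linear in $\PP^9$), but diagnosing the obstacle is not the same as overcoming it; the fixed-node pencil is the missing ingredient. A secondary, fixable issue: your third test curve $C_3$ (``a pencil whose members span a moving $\PP^3$ inside a general three-dimensional linear section'') is too loosely specified to have verifiable intersection numbers; the paper instead uses the boundary curve $B_3$ given by a pencil of lines in a plane $\Sigma_{(1,1)^*}$ with a fixed conic attached at the base point, for which $B_3\cdot\Hh=1$, $B_3\cdot\HH=0$, $B_3\cdot\Delta=-1$, $B_3\cdot P=1$ are all computable.
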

\begin{proof}
Let $B_1$ (respectively, $B_2$) denote the curve in $\Kgnb{0,0}(G(k,n),3)$ induced by a pencil of twisted cubic curves on a quadric surface contained in a fixed $\Sigma_{(1,1,1)^*}$ (respectively, $\Sigma_{(3)^*}$). These pencils sweep out a surface with cohomology class $2\sigma_{(1,1)^*}$ (respectively, $2\sigma_{(2)^*}$) and have four reducible members.  Let $B_3$ denote the curve in $\Kgnb{0,0}(G(k,n),3)$ induced by a pencil of lines in $\Sigma_{(1,1)^*}$ union a fixed conic attached at the base-point of the pencil. The class $P$ is determined by the following intersection numbers.
$$B_1 \cdot \Hh = 2, \ \ B_1 \cdot \HH = 0, \ \ B_1 \cdot \Delta = 4, \ \ \ \ B_1 \cdot P = 0  $$
$$B_2 \cdot \Hh = 0, \ \ B_2 \cdot \HH = 2, \ \ B_2 \cdot  \Delta = 4, \ \ \ \ B_2 \cdot P = 0  $$
$$B_3 \cdot \Hh = 1, \  \ B_3 \cdot \HH = 0, \ \ B_3 \cdot \Delta = -1, \ \ B_3 \cdot P = 1 $$
$P$ is the pull-back of a very ample divisor class under the rational map $$\phi_P: \Kgnb{0,0}(G(k,n),3) \dashrightarrow \GG(3,N)$$ mapping a stable map to the span of its image. Hence, the base locus of $P$ is contained in the indeterminacy locus of the map $\phi_P$. Namely, it is contained in either the locus of maps whose (reduced) image is a curve of degree less than three or a curve of degree three that spans a $\PP^2$. If a curve of degree three in the Grassmannian spans a $\PP^2$, then the $\PP^2$ must be contained in the Grassmannian since the ideal of the Grassmannian in its Pl\"{u}cker embedding is generated by quadrics. We conclude that the base locus of $P$ is contained in the locus $\ccc[(1,1)^*] \cup \ccc[(2)^*] \cup \qqq((1)^*)\lll$ (note that the locus of three-to-one maps onto a line is contained in $\ccc[(1,1)^*] \cap \ccc[(2)^*]$). Conversely, let $B_4$ (respectively, $B_5$) be the curves in $\Kgnb{0,0}(G(k,n),3)$ induced by a pencil of nodal cubics in $\Sigma_{(1,1)^*}$ (respectively, $\Sigma_{(2)^*}$) containing a fixed node and 5 base-points. Curves in the classes $B_4$ and $B_5$ cover the loci $\ccc[(1,1)^*]$ and $\ccc[(2)^*]$, respectively. We have the following intersection numbers
$$B_4 \cdot \Hh = 1, \ \ B_4 \cdot \HH= 0, \ \ B_4 \cdot \Delta = 5, \ \ B_4 \cdot P = -1$$
$$B_5 \cdot \Hh = 0, \ \ B_5 \cdot \HH= 1, \ \ B_5 \cdot \Delta = 5, \ \ B_5 \cdot P = -1$$
Therefore, $\ccc[(1,1)^*] \cup \ccc[(2)^*] $ must be contained in the base locus of $P$.
Similarly, let $B_6$ be a moving curve in $\qqq((1)^*)\lll$ such that the (reduced) image of the maps parameterized by $B_6$ is a fixed pair of lines in $G(k,n)$. Then since $B_6 \cdot \Hh= B_6 \cdot \HH = 0$ and $B_6 \cdot \Delta > 0$, $B_6 \cdot P < 0$. We conclude that $\qqq((1)^*)\lll$ is contained in the base locus of $P$.
\end{proof}
\smallskip

\noindent {\bf The class $F$.} Fix two linear spaces $\Lambda \cong \PP^{N-3} \subset \Gamma \cong \PP^{N-1}$ in $\PP^N$. Let $V$ denote the open subset of $\Kgnb{0,0}(G(k,n),3)$ parameterizing maps $f$ such that $f^{-1}(\Gamma)$ is three distinct points. Let $F$ denote the class of the closure in $\Kgnb{0,0}(G(k,n),3)$ of the locus of maps $f$ such that the line $l$ spanned by a pair of points in $\Gamma \cap \mbox{Image}(f)$ intersects $\Lambda$. Equivalently, the projection from $\Lambda$ of the image of $f$ has a node contained in the   image of the projection of $\Gamma$.

\begin{lemma}\label{disc-F}
The class $F$ is equal to $$F= \frac{1}{3}(5 \Hh + 5 \HH - \Delta)$$ The stable base locus of $F$ consists of $\ccc[(1)^*] \cup \qqq((1)^*)\lll$.
\end{lemma}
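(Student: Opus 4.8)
The plan is to mirror the proof of Lemma~\ref{disc-P}: first pin down the class $F$ by intersecting against a spanning set of test curves, then sandwich the stable base locus between an upper bound coming from the indeterminacy locus of the defining rational map and a lower bound coming from covering curves of negative degree.

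For the class computation I would reuse the three test curves $B_1, B_2, B_3$ already introduced in Lemma~\ref{disc-P}, whose intersections with $\Hh, \HH, \Delta$ are recorded there. Writing $F = a\Hh + b\HH + c\Delta$, those numbers give the linear system $B_1 \cdot F = 2a + 4c$, $B_2 \cdot F = 2b + 4c$, $B_3 \cdot F = a - c$, so it remains to compute the three numbers $B_i \cdot F$ geometrically. Here I would use the reformulation in the statement: projecting the image of $f$ from $\Lambda$ to $\PP^2$ produces a plane cubic, and $f \in F$ precisely when one of its nodes lies on the fixed line $\pi_\Lambda(\Gamma)$. For each pencil $B_i$ one counts the members whose projected node meets that line, handling separately the reducible members of the pencils (for $B_3$ the fixed conic and the moving line together supply the relevant nodes). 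I expect each of these counts to equal $2$, which forces $a = b = \tfrac{5}{3}$, $c = -\tfrac{1}{3}$, i.e. $F = \frac{1}{3}(5\Hh + 5\HH - \Delta)$.

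For the upper bound on the base locus, $F$ is by construction the pullback of $\OO_{\PP^2}(1)$ under the rational ``node'' map $\phi_F$ sending $f$ to the node of $\pi_\Lambda(\mathrm{Image}(f))$; hence the base locus of $|F|$ is contained in the indeterminacy locus of $\phi_F$. I would identify this locus as exactly those maps for which no node survives projection: either the reduced image is a line, giving $\ccc[(1)^*]$, or the map acquires a double cover of a line, which forces $f^{-1}(\Gamma)$ to fail to be three distinct points and yields $\qqq((1)^*)\lll$. The key point to stress, and the main contrast with Lemma~\ref{disc-P}, is that the plane-cubic loci $\ccc[(1,1)^*]$ and $\ccc[(2)^*]$ are \emph{not} in this base locus: a general such map has rational plane-cubic image with a genuine node, and since a general $\Lambda$ is disjoint from the spanning $\PP^2$, the projection is an isomorphism onto the target plane and the node persists, so $\phi_F$ is defined there even though $\phi_P$ was not.

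For the reverse inclusion I would exhibit covering curves of negative $F$-degree. Since $F = \frac{1}{3}(5\Hh + 5\HH - \Delta)$, any family $B$ with $B \cdot \Hh = B \cdot \HH = 0$ and $B \cdot \Delta > 0$ automatically has $B \cdot F < 0$. Fixing a line in $G(k,n)$ and varying the branch divisor of a triple cover gives such a family covering $\ccc[(1)^*]$, while a family of type $B_6$, moving the double-cover data while keeping the reduced pair of lines fixed, covers $\qqq((1)^*)\lll$; both meet $\Hh$ and $\HH$ in zero and $\Delta$ positively. I anticipate the main obstacle is the geometric enumeration of the $B_i \cdot F$, in particular the bookkeeping of the contributions of the reducible and degenerate members of each pencil to the projected node, together with checking that the indeterminacy locus of $\phi_F$ contains no spurious components beyond $\ccc[(1)^*] \cup \qqq((1)^*)\lll$.
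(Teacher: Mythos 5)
Your overall architecture --- test curves to pin down the class, an explicit section missing a general map for the upper bound on the base locus, and covering curves of negative degree for the lower bound --- is exactly the paper's, and your lower-bound step coincides with it verbatim (the paper uses $B_6$ and a family $B_7$ of multiple covers of a fixed line, both with zero intersection with $\Hh$ and $\HH$ and positive intersection with $\Delta$). The one substantive difference is the choice of test curves for the class. The paper does not reuse $B_1, B_2$; it uses $B_3$ together with the nodal-cubic pencils $B_4, B_5$ from Lemma~\ref{disc-P}, and the point of that choice is that $F \cdot B_4 = F \cdot B_5 = 0$ comes essentially for free (and is cited from \cite{dawei:cubic}): the node of every member of those pencils is fixed, so the projected node is a fixed point of $\PP^2$ and a general line misses it --- no enumeration required. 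Your $B_1, B_2$ instead force you to count the members of a pencil of twisted cubics on a quadric whose projected node lands on a general line; the answer is indeed $2$ in each case, consistent with $F=\frac{1}{3}(5\Hh+5\HH-\Delta)$, but this is precisely the computation you defer, and it carries the usual risk of excess contributions from the four reducible members of each pencil. If you keep your route you must actually carry out that count; otherwise switch to pencils with a fixed node, where the relevant intersection number is visibly zero. One smaller caution: $F$ is not literally the pullback of $\OO_{\PP^2}(1)$ under a single-valued ``node map'' (a reducible image acquires two or three nodes after projection), so the clean upper-bound argument is the paper's: for $f$ birational onto a degree-three image, choose $\Gamma$ meeting the image in three distinct points and then $\Lambda\subset\Gamma$ missing the three secant lines, producing a divisor in the class $F$ not containing $f$. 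Your observation that $\ccc[(1,1)^*]$ and $\ccc[(2)^*]$ consequently drop out of the base locus, in contrast with $P$, is correct and is indeed the point of the lemma.
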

\begin{proof}
Let $B_4$ and $B_5$ be the curves introduced in the proof of Lemma \ref{disc-P}. Then $F$ is the class of the pull-back under the natural inclusion of the corresponding divisor class from $\Kgnb{0,0}(\PP^N, 3)$. By \cite{dawei:cubic}, $F \cdot B_4 = F \cdot B_5 = 0$. On the other hand, $B_3 \cdot F = 2$. The formula for $F$ follows from these intersection numbers and the calculations in the proof of Lemma \ref{disc-P}. Suppose the image of a map $f$ is a curve of degree three in $G(k,n) \subset \PP^N$. Then we can always choose a hyperplane $\Gamma$ in $\PP^N$ that intersects the image of $f$ in three distinct points $p_1, p_2, p_3$. By taking $\Lambda$ to be a codimension two linear subspace of $\Gamma$ not intersecting the lines  joining any pair of points $p_i, p_j$ with $i \not=j$, we obtain a divisor in the class $F$ whose support does not contain $f$. We conclude that the base locus of $F$ is contained in $\ccc[(1)^*] \cup \qqq((1)^*)\lll$. Since the curve $B_6$ introduced in the proof of Lemma \ref{disc-P} has $B_6 \cdot F <0$, $\qqq((1)^*)\lll$ is in the base locus of $P$. Similarly, let $B_7$ be a moving one-parameter family of maps in $\ccc[(1)^*]$ intersecting the boundary divisor whose image is a fixed line in $G(k,n)$.  Since $B_7 \cdot \Hh = B_7 \cdot \HH = 0$ and $B_7 \cdot \Delta >0$, we conclude that $B_7 \cdot F <0$ and $\ccc[(1)^*]$ is in the base locus of $F$.
\end{proof}

The $\PP^{k-1}$'s parameterized by a twisted cubic curve in $G(k,n)$ sweep out a rational scroll of degree three in $\PP^{n-1}$. We can define divisors in $\Kgnb{0,0}(G(k,n),3)$ by imposing conditions on this scroll.
\smallskip

\noindent {\bf The classes $S$ and $S'$.} Fix a linear space $\Lambda = \PP^{n-k-2} \subset \Gamma = \PP^{n-k}$ in $\PP^{n-1}$. Let $U$ be the Zariski open subset of $\Kgnb{0,0}(G(k,n),3)$ consisting of maps $f$ such that the linear spaces parameterized by the image of $f$ sweep out a balanced cubic rational scroll (equivalently, a cone over the Segre embedding of $\PP^1 \times \PP^2$). Let $S$ be the class of the closure in $\Kgnb{0,0}(G(k,n),3)$ of the locus of maps where the scroll contains a quadric hypersurface of dimension $k-1$ whose span contains $\Lambda$ and intersects $\Gamma$ in a linear space of dimension $n-4$. The projection of a cubic scroll of dimension $k$ in $\PP^{n-1}$ from $\Lambda$ is a cubic hypersurface in $\PP^{k+1}$ which is double along a $\PP^{k-1}$. Conversely, an irreducible cubic hypersurface in $\PP^{k+1}$ which is double along a $\PP^{k-1}$ arises as a projection of a cubic scroll. $S$ is the class of the divisor of maps where the singular locus of the projection of the scrolls from $\Lambda$ intersects a fixed line (the image of the projection of $\Gamma$). Given a divisor class $D$ in $\Kgnb{0,0}(G(n-k,n),3)$ we can define a dual divisor class $D'$ in $\Kgnb{0,0}(G(k,n),3)$. $G(k,n)$ and $G(n-k, n)$ are isomorphic. This isomorphism induces an isomorphism between $\Kgnb{0,0}(G(k,n),3)$ and $\Kgnb{0,0}(G(n-k,n),3)$. Let $D'$ denote the pull-back of a divisor class $D$ in $\Kgnb{0,0}(G(n-k,n),3)$ under the isomorphism. In particular, define $S'$ to be the divisor class obtained by starting with $S$ in $\Kgnb{0,0}(G(n-k, n),3)$.

\begin{lemma}\label{disc-S}
The classes $S$ and $S'$ are equal to the following:
$$S = \frac{1}{3}(-\Hh + 5\HH-\Delta)$$
$$S' = \frac{1}{3}(\ 5\Hh - \HH - \Delta)$$
The stable base locus of $S$ consists of $\ccc[(1,1,1)^*] \cup \qqq((1,1)^*)\lll$. The stable base locus of $S'$ consists of $\ccc[(3)^*] \cup \qqq((2)^*)\lll$.
\end{lemma}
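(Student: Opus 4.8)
The plan is to follow the template established in Lemmas \ref{disc-P} and \ref{disc-F}: first pin down the class $S$ by intersecting against test curves, then bound its stable base locus from above using the geometric definition and from below using moving curves of negative degree, and finally deduce everything about $S'$ formally. For the class, I would reuse the three test curves $B_1, B_2, B_3$ from the proof of Lemma \ref{disc-P}, whose intersection numbers against the basis $\Hh, \HH, \Delta$ are already recorded there; the corresponding $3\times 3$ matrix has determinant $-12$, so it is invertible. Writing $S = a\Hh + b\HH + c\Delta$, those recorded numbers give $S\cdot B_1 = 2a+4c$, $S\cdot B_2 = 2b+4c$ and $S\cdot B_3 = a-c$, so the only genuinely geometric content is to evaluate the three numbers $S\cdot B_1, S\cdot B_2, S\cdot B_3$ directly from the definition of $S$, i.e.\ by tracking the singular $\PP^{k-1}$ of the projection from $\Lambda$ of the cubic scroll swept out along each family and counting the members whose singular locus meets the fixed line. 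I expect $S\cdot B_1 = -2$, $S\cdot B_2 = 2$ and $S\cdot B_3 = 0$, which forces $S = \frac13(-\Hh+5\HH-\Delta)$. Once $S$ is known, $S'$ needs no new computation: the isomorphism $G(k,n)\cong G(n-k,n)$ induces an involution of $\Kgnb{0,0}(G(k,n),3)$ that interchanges $\Hh$ and $\HH$ (Grassmann duality sends $\sigma_\lambda$ to the conjugate $\sigma_{\lambda^{T}}$, hence $\sigma_{1,1}\leftrightarrow\sigma_2$) and fixes $\Delta$, and by definition $S'$ is the pullback of $S$ under this involution, giving $S' = \frac13(5\Hh-\HH-\Delta)$.

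Next I would bound the stable base locus of $S$ from above. As in Lemmas \ref{disc-P} and \ref{disc-F}, $S$ is the pullback of a semiample class under the rational map sending $f$ to the singular $\PP^{k-1}$ of its projected cubic scroll, so the base locus is contained in the locus where this construction degenerates. For a map whose image sweeps out a genuine balanced cubic scroll with a well-defined singular directrix, a general choice of the fixed line (the image of $\Gamma$ under projection) misses the directrix, producing a section of $|S|$ not vanishing at $f$; hence such an $f$ is not in the base locus. The task is to identify the degeneration locus precisely and verify it is $\ccc[(1,1,1)^*]\cup\qqq((1,1)^*)\lll$. The delicate point here is the asymmetry between the two rulings of the scroll: this is why $\ccc[(3)^*]$ does \emph{not} occur (consistent with the negative $\Hh$-coefficient of $S$), whereas by the duality above it is exactly $\ccc[(3)^*]\cup\qqq((2)^*)\lll$ that occurs for $S'$.

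Finally I would establish the reverse containment by exhibiting moving curves of negative $S$-degree covering each claimed component. Since $S\cdot B_1 = -2 < 0$ and deformations of $B_1$ (pencils of twisted cubics on quadric surfaces inside $\Sigma_{(1,1,1)^*}$) sweep out $\ccc[(1,1,1)^*]$, that locus lies in the base locus of $S$; this automatically captures $\ccc[(1,1)^*]\subseteq\ccc[(1,1,1)^*]$, in agreement with $S\cdot B_4 = -2$. For the reducible component I would use a $B_6$-type moving family inside $\qqq((1,1)^*)\lll$ with fixed reduced image, for which $S\cdot B<0$ exactly as in the proofs of Lemmas \ref{disc-P} and \ref{disc-F}, forcing $\qqq((1,1)^*)\lll$ into the base locus. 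The base locus statement for $S'$ is then the image of that for $S$ under the duality involution, using that it sends $\ccc[(1,1,1)^*]\mapsto\ccc[(3)^*]$ and $\qqq((1,1)^*)\lll\mapsto\qqq((2)^*)\lll$. The main obstacle throughout is the geometric evaluation of the three numbers $S\cdot B_i$ via a careful analysis of the projected cubic scroll, together with the exact determination of its degeneration locus; the class of $S'$, its base locus, and the final assembly are then routine linear algebra and duality.
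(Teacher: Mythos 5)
Your overall architecture (test curves for the class, duality for $S'$, sections for the upper bound, moving curves covering the claimed loci for the lower bound) is the same as the paper's, but two of your specific steps fail. First, the class computation: you propose to evaluate $S\cdot B_1$ ``directly from the definition of $S$'' by counting the members of the pencil whose projected scroll has singular locus meeting the fixed line. But $B_1$ covers $\ccc[(1,1,1)^*]$, which lies in the stable base locus of $S$, and indeed $S\cdot B_1=-2<0$; consequently $B_1$ is contained in \emph{every} effective divisor of class $S$ (the linear spaces parameterized by the maps in $B_1$ cover a $\PP^k$ three-to-one, so the ``singular locus of the projection'' degenerates identically along $B_1$). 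A transversal enumerative count can only return a non-negative number, so it cannot produce $-2$; you would need an excess-intersection computation, which you do not supply. Without $S\cdot B_1$ you are left with only the two conditions $S\cdot B_2=2$ and $S\cdot B_3=0$, which do not determine the class. The paper avoids this by intersecting against three families that are \emph{not} contained in the base locus: $B_8$ (the cone over a pencil of nodal plane cubics) and $B_9$, $B_{10}$ (pencils of lines in $\Sigma_{(1,1)^*}$, resp.\ $\Sigma_{(2)^*}$, with a fixed conic attached), whose scrolls are described explicitly so that $B_8\cdot S=B_9\cdot S=0$ and $B_{10}\cdot S=2$ can be computed honestly.

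Second, your moving curve for the reducible component does not exist where you need it: a family inside $\qqq((1,1)^*)\lll$ with fixed reduced image has $\Hh$- and $\HH$-degree zero, but the general point of $\qqq((1,1)^*)\lll$ is a stable map mapping birationally onto a nodal line-plus-conic and admits no such deformations; $B_6$-type families only cover the much smaller locus $\qqq((1)^*)\lll$. The paper instead uses $B_{11}$, a pencil of conics in $\Sigma_{(1,1)^*}$ with a fixed line attached at a base point of the pencil; curves in this class do cover $\qqq((1,1)^*)\lll$ and satisfy $B_{11}\cdot S=-1<0$. (Your upper bound is also only announced as a ``task'': the input actually proved in the paper is that whenever the parameterized linear spaces do not cover a $\PP^k$ multiple-to-one, the singular locus of the projected scroll has dimension at most $k-1$, so $\Lambda$ and $\Gamma$ can be chosen to give a section of $S$ not vanishing at $f$; restricting attention to balanced scrolls, as you do, would leave a larger degeneration locus to control.) The duality step for $S'$ and the use of $B_1\cdot S=-2<0$ to place $\ccc[(1,1,1)^*]$ in the base locus are correct and agree with the paper.
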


\begin{proof}
The assertions about $S'$ follow from the assertions about $S$ by duality. To calculate the class of $S$, we use test families. Consider a pencil of plane cubics with a fixed node. Take the cone over this pencil with a vertex equal to a projective linear space  $\PP^{k-2}$. This pencil of cubic scrolls induces a one-parameter family of degree three curves in $G(k,n)$, hence a curve $B_8 \in \Kgnb{0,0}(G(k,n),3)$. The following intersection numbers are easy to calculate:
$$B_8 \cdot \Hh =0, \ \ B_8 \cdot \HH = 1, \ \ B_8 \cdot \Delta = 5, \ \ B_8 \cdot S = 0$$
Let $B_9$ (respectively, $B_{10}$)  be the curves induced in $\Kgnb{0,0}(G(k,n),3)$ by attaching a conic at the base point of a pencil of lines contained in $\Sigma_{(1,1)^*}$ (respectively, $\Sigma_{(2)^*}$). We can interpret the corresponding scrolls as follows. The scrolls swept out by the linear spaces parameterized by points in $B_9$, are the union of a fixed quadric scroll with a fixed linear space $L$ of projective dimension $k$ having a common $\PP^{k-1}$. The intersection of the $\PP^{k-1}$'s varying in a pencil in $L$ is the only data that varies. The scrolls swept out by the linear spaces parameterized by points in $B_{10}$, are the unions of a fixed quadric scroll with a pencil of linear spaces of projective dimension $k$ having a common $\PP^{k-1}$ with the quadric scroll. Using these geometric descriptions, the following intersection numbers are straightforward to calculate:
$$B_9 \cdot \Hh = 1, \ \ B_9 \cdot \HH = 0, \ \ B_9 \cdot \Delta = -1, \ \ B_9 \cdot S = 0$$
$$B_{10} \cdot \Hh = 0, \ B_{10} \cdot \HH = 1, \ B_{10} \cdot \Delta = -1, \ B_{10} \cdot S= 2$$ The class of $S$ (and by duality that of $S'$) follows from these calculations.

Let $B_{11}$ be the curve induced in $\Kgnb{0,0}(G(k,n),3)$ from a pencil of conics in $\Sigma_{(1,1)^*}$ union a line at a base point of the pencil. Curves in the same class as $B_{11}$ cover the locus $\qqq((1,1)^*)\lll$. Since
$$B_{11} \cdot \Hh = 1, \ \ B_{11}\cdot \HH = 0, \ \ B_{11} \cdot \Delta = 2,$$ we conclude that $B_{11} \cdot S = -1 < 0$. Therefore, $\qqq((1,1)^*)\lll$ is in the stable base locus of $S$.
Recall from the proof of Lemma \ref{disc-P} that  $B_{1}$ is a pencil of twisted cubics on a quadric contained in $\Sigma_{(1,1,1)^*}$. Since curves in the class $B_1$ cover $\ccc[(1,1,1)^*]$ and $B_1 \cdot S = -2 < 0$, we conclude that  $\ccc[(1,1,1)^*]$ is in the stable base locus of $S$.

Suppose $X$ is an irreducible cubic scroll of dimension $k$ that spans a projective linear space of dimension $k+1$ or more. Then the singular locus of $X$ has dimension less than or equal to $k-1$. We can always choose a linear space  $\Lambda = \PP^{n-k-2}$ so that the projection of $X$ still spans a linear space of dimension $k+1$. We can then find a line $l$ disjoint from the singular locus of the projection $X$. If we take $\Gamma$ to be the span of $\Lambda$ and $l$, we obtain a section of $S$ not vanishing on the point in $\Kgnb{0,0}(G(k,n),3)$ induced by the scroll $X$. Similarly, as long as the linear spaces parameterized by a point in $\Kgnb{0,0}(G(k,n),3)$ do not cover a linear space of dimension $k$ multiple-to-one, then the singular locus of the resulting (possibly reducible) scroll has dimension less than or equal to $k-1$ and the same argument shows that the point is not in the base locus of $S$. We conclude that the base locus of $S$ is contained in $\ccc[(1,1,1)^*] \cup \qqq((1,1)^*)\lll$, hence equality holds. This completes the proof of the proposition.
\end{proof}

Finally, the following lemma determines the stable base locus of $\ddeg$ and $\dunb$.

\begin{lemma}\label{deg-unb}
The stable base locus of $\dunb$ is $\ccc[(3,2)^*] \cup \qqq((2)^*)\lll$. The stable base locus of $\ddeg$ is $\ccc[(2,2,1)^*] \cup \qqq((1,1)^*)\lll$.
\end{lemma}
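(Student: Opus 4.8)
The plan is to establish the statement for $\dunb$ and to obtain the statement for $\ddeg$ by duality. The isomorphism $G(k,n)\cong G(n-k,n)$ induces an isomorphism $\Kgnb{0,0}(G(k,n),3)\cong\Kgnb{0,0}(G(n-k,n),3)$ that interchanges $\Hh$ and $\HH$, fixes $\Delta$, and hence interchanges $\dunb$ and $\ddeg$. On the level of Schubert data it transposes partitions, so it carries $\ccc[(3,2)^*]$ to $\ccc[(2,2,1)^*]$ and $\qqq((2)^*)\lll$ to $\qqq((1,1)^*)\lll$. Thus the two assertions are exchanged, and it suffices to compute the stable base locus of $\dunb$.

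For the containment $\ccc[(3,2)^*]\cup\qqq((2)^*)\lll\subseteq\mathrm{SBL}(\dunb)$ I would first extract as much as possible from the classes already computed. Since $\dunb=\frac13(2\Hh-\HH-\Delta)$, Lemmas~\ref{disc-P} and~\ref{disc-S} give the relations $P=\dunb+\HH$ and $S'=\dunb+\Hh$; because $\Hh$ and $\HH$ are base-point-free (Theorem~\ref{summary}), multiplying sections of $\dunb$ by sections of $\HH$ and $\Hh$ shows $\mathrm{SBL}(P)\cup\mathrm{SBL}(S')\subseteq\mathrm{SBL}(\dunb)$. This already places $\qqq((2)^*)\lll$ together with the subloci $\ccc[(1,1)^*]$, $\ccc[(2)^*]$ and $\ccc[(3)^*]$ in the base locus. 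These Schubert subloci are all \emph{proper} in $\ccc[(3,2)^*]$, so the remaining task is to reach the generic point of $\ccc[(3,2)^*]$, whose members are irreducible cubics sweeping an unbalanced scroll of type $S_{0,\dots,0,1,2}$, i.e.\ a cone with vertex $\PP^{k-3}$ over a cubic surface scroll. I would cover this locus by a moving curve $B$ obtained as the cone, with fixed vertex $\PP(F_{k-2})$, over a general pencil of such surface scrolls sharing a directrix line inside a fixed $\Sigma_{(3,2)^*}$; equivalently, $B$ is pulled back from a pencil in $\Kgnb{0,0}(G(2,n-k+2),3)$ lying in the relevant Schubert locus. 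Computing the cohomology class of the surface swept out (which records $B\cdot\Hh$ and $B\cdot\HH$ as in Lemma~\ref{disc-P}) and the number of reducible members (which records $B\cdot\Delta$) should give $B\cdot\dunb<0$, forcing $\ccc[(3,2)^*]\subseteq\mathrm{SBL}(\dunb)$.

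For the reverse containment I would use the definition of $\dunb$ as a pullback under the flag/projection construction of Theorem~\ref{summary} together with the explicit-section technique of Lemma~\ref{disc-S}. The rank $k-3$, degree $0$ subbundle of $f^*S$ is defined precisely for those $f$ whose linear spaces meet in a common $\PP^{k-4}$, the generic balanced vertex; for such $f$ with irreducible domain, and more generally for any $f$ not in $\qqq((2)^*)\lll$, one can choose the linear space defining the divisor on the target Grassmannian so that the pulled-back section of $\dunb$ does not vanish at $f$. Since the only maps for which this construction fails are those whose linear spaces share a $\PP^{k-3}$, that is, the jump to the unbalanced vertex recorded by $\ccc[(3,2)^*]$, together with the reducible degenerations in $\qqq((2)^*)\lll$, the base locus is contained in $\ccc[(3,2)^*]\cup\qqq((2)^*)\lll$, yielding equality. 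This parallels the treatment of $\dunb$ in the degree two analysis of Theorem~\ref{de-two}.

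The main obstacle is the explicit construction and class computation of the moving curve $B$ covering $\ccc[(3,2)^*]$: unlike the subloci reached from $P$ and $S'$, the generic unbalanced scroll is forced into high codimension, since the balanced vertex $\PP^{k-4}$ must jump to $\PP^{k-3}$, a codimension $k-2$ condition, so no previously constructed test curve sweeps it out and the swept-surface class must be computed directly, most cleanly by reducing via the cone construction to a curve class in $\Kgnb{0,0}(G(2,n-k+2),3)$. A secondary difficulty is the boundary bookkeeping: one must verify that the reducible members of $B$ and the indeterminacy analysis in the upper bound both single out exactly $\qqq((2)^*)\lll$ and no larger boundary stratum.
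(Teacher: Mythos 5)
Your outline is sound and structurally parallel to the paper's argument (duality, explicit effective divisors in the class for the upper bound, moving curves for the lower bound), but the two proofs diverge in instructive ways. First, the paper dualizes in the opposite direction: it proves the statement for $\ddeg$ and deduces the one for $\dunb$, precisely because divisors in the class $\ddeg$ have the concrete description ``the projection of the span of the linear spaces from a fixed $\Lambda$ of dimension $n-k-3$ drops dimension,'' so the upper bound is immediate --- $\ccc[(2,2,1)^*]\cup\qqq((1,1)^*)\lll$ is exactly the locus of maps landing in some $G(k,k+2)$, and any $f$ outside it is avoided by the divisor attached to a general $\Lambda$. Your direct attack on $\dunb$ via the indeterminacy of the subbundle map is workable but is the weakest point of the proposal: you must justify that the rank $k-3$, degree $0$ subbundle construction produces an honest effective divisor avoiding a given $f$ with \emph{reducible} domain whenever the linear spaces of $f$ do not share a common $\PP^{k-3}$, and the splitting analysis on a nodal domain is exactly the bookkeeping the paper sidesteps by working with $\ddeg$. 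Second, your observation that $P=\dunb+\HH$ and $S'=\dunb+\Hh$ with $\Hh,\HH$ base-point-free, so that the stable base loci of $P$ and $S'$ (Lemmas \ref{disc-P} and \ref{disc-S}) sit inside that of $\dunb$, is a genuinely different and efficient way to capture $\qqq((2)^*)\lll$ and the smaller $\ccc$-loci; the paper instead reuses the negative test curve from Lemma \ref{deg-unb}'s ``previous lemma.'' Third, the moving curve you defer as the main obstacle is not actually missing from the paper: the curves $B_{13}$ (a cone over a pencil of cubic surface scrolls with fixed directrix, intersection numbers $(1,2,3)$ with $\Hh,\HH,\Delta$) and $B_{14}$ (from the scroll $S_{1,2,2}$, intersection numbers $(1,5,0)$) constructed in the proof of Theorem \ref{de-three} both cover $\ccc[(3,2)^*]$ and satisfy $B_{13}\cdot\dunb=B_{14}\cdot\dunb=-1$, which is exactly the curve you describe; the paper's own proof of this lemma uses the dual family, a pencil of cubic hypersurfaces double along a fixed $\PP^{k-1}$, with $\ddeg$-degree $-1$. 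So the proposal is correct in substance, with the reducible-domain case of the upper bound being the one step you should either flesh out or replace by the paper's dual projection argument.
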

\begin{proof}
By duality, it suffices to consider the stable base locus of $\ddeg$. Note that $\ccc[(2,2,1)^*] \cup \qqq((1,1)^*)\lll$ is the locus of maps whose images lie in a sub-Grassmannian $G(k, k+2)$ of $G(k,n)$. Suppose the image of a map $f$ does not lie in a sub-Grassmannian $G(k,k+2)$, then the image of $f$ lies in a sub-Grassmannian $G(k,k+3)$. Take a linear space $\Lambda$ of dimension $n-k-3$  that does not intersect the $(k+3)$-dimensional linear space spanned by the linear spaces parameterized by the image of $f$. The locus of maps $g \in \Kgnb{0,0}(G(k,n),3)$ such that the projection from $\Lambda$ of the span of the linear spaces parameterized by $g$ has dimension less than or equal to $k+2$ is an effective divisor $D$ in the class $\ddeg$. Since $f \not\in D$, we conclude that the stable base locus of $\ddeg$ is contained in $\ccc[(2,2,1)^*] \cup \qqq((1,1)^*)\lll$. By the argument in the previous lemma, $\qqq((1,1)^*)\lll$ is in the stable base locus of $\ddeg$. To see that $\ccc[(2,2,1)^*]$ is contained in the stable base locus of $\ddeg$, take a pencil of cubic hypersurfaces double along a fixed projective linear space $\PP^{k-1}$. (Note that a general projection of a cubic scroll of dimension $k$ to $\PP^{k+1}$ is a cubic hypersurface double along a $\PP^{k-1}$.) This family of cubic hypersurfaces induces a curve in $\Kgnb{0,0}(G(k,n),3)$ whose intersection number with $\ddeg$ is $-1$. These curves cover the locus $\ccc[(2,2,1)^*]$. We conclude that the stable base locus of $\ddeg$ is $\ccc[(2,2,1)^*]\cup \qqq((1,1)^*)\lll$.
\end{proof}
\smallskip

\begin{definition}
Let $U$ and $U'$ be the divisor classes $$U= 2\Hh + 5\HH - \Delta, \ \ \ U' = 5\Hh + 2 \HH - \Delta.$$ Let $R$ be the divisor class $$R= \Hh + \HH - \Delta.$$ Let $V$ and $V'$ be the divisor classes $$V= \Hh + 4\HH -2 \Delta, \ \ \ V' = 4 \Hh + \HH-2\Delta.$$
\end{definition}

The next theorem describes the stable base locus decomposition of  $\Kgnb{0,0}(G(k,n), 3)$. Since there are 22 chambers in the decomposition, the statement of the theorem is necessarily long. The decomposition is summarized in Figure \ref{fig:cubdecomp}, where we draw a cross-section of the three-dimensional cone.

\begin{figure}[!ht] %  figure placement: here, top, bottom, or page
   \centering
   \includegraphics[width=5.5in]{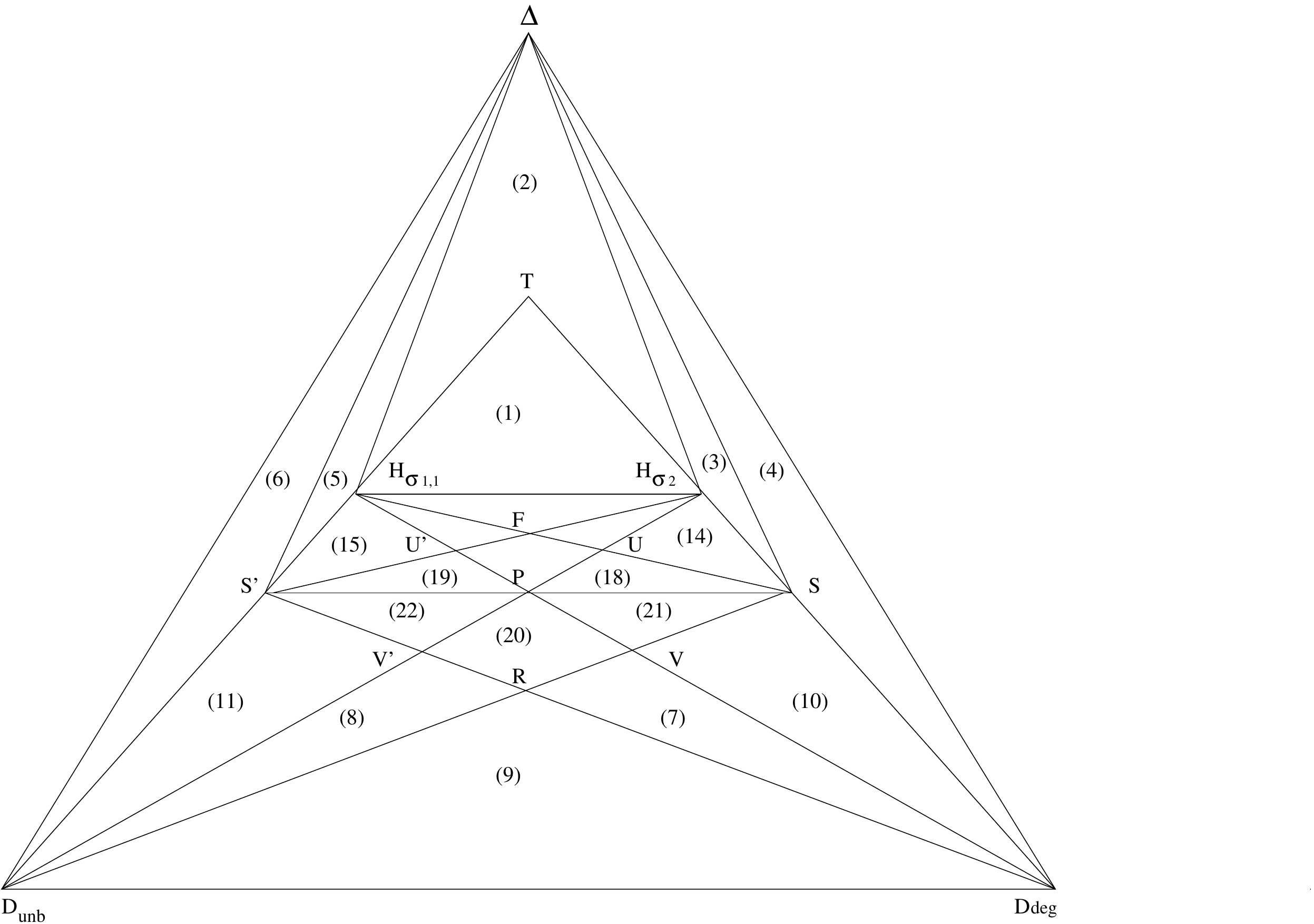}
   \caption{The stable base locus decomposition of $\Kgnb{0,0}(G(k,n),3)$.}
   \label{fig:cubdecomp}
\end{figure}

\begin{theorem}\label{de-three}
The stable base locus decomposition of the effective cone of $\Kgnb{0,0}(G(k,n), 3)$, with $3 \leq k < k+3 \leq n$,  is as follows:
\begin{enumerate}
\item In the closed cone spanned by  $\Hh, \HH $ and $T$ the stable base locus is empty.
\smallskip

\item In the domain bounded by $\Delta, \Hh, T$ and $\HH$ union $c( \Hh \overline{\Delta}) \cup c(\HH \overline{\Delta})$ the stable base locus is equal to the boundary divisor.
\smallskip

\item In the domain bounded by $\HH, \Delta$ and $S$ union $c( \Delta S)$,  the stable base locus is the union of $\ccc[(1,1, 1)^*]$ and the boundary divisor.
\smallskip

\item In the domain bounded by $\ddeg, S$ and $\Delta$ union $c(\Delta \ddeg)$, the stable base locus is the union of $\ccc[(2,2,1)^*]$ and the boundary divisor.
\smallskip

\item In the domain bounded by $\Hh, \Delta$ and $S'$  union $c(\Delta S')$, the stable base locus is the union of $\ccc[(3)^*]$ and the boundary divisor.
\smallskip

\item In the domain bounded by $\dunb, S'$ and $\Delta$ union $c(\Delta \dunb)$ the stable base locus is the union of  $\ccc[(3,2)^*]$ and the boundary divisor.
\smallskip

\item In the domain bounded by $\ddeg, R$ and $V$ union $c(\ddeg R)$, the stable base locus is $\ccc[(2,2,1)^*] \cup \ccc[(3)^*] \cup \qqq((2)^*)\lll \cup \qqq((1,1)^*)\lll$.
\smallskip

\item In the domain bounded by $\dunb, R$ and $V'$ union $c(\dunb R)$, the stable base locus is $\ccc[(3,2)^*] \cup \ccc[(1,1,1)^*] \cup \qqq((1,1)^*)\lll \cup \qqq((2)^*)\lll$.
\smallskip

\item In the domain bounded by $\ddeg, \dunb$ and $R$ union $c(\ddeg \dunb)$, the stable base locus is the union  $\ccc[(3,2)^*] \cup \ccc[(2,2,1)^*] \cup \qqq((2)^*)\lll \cup \qqq((1,1)^*)\lll$.
\smallskip

\item In the domain bounded by $\ddeg, S$ and $V$ union $c(V \overline{D}_{deg}) \cup c(S \overline{D}_{deg})$, the stable base locus is the locus  $\ccc[(2,2,1)^*] \cup \qqq((1,1)^*)\lll$.
\smallskip

\item In the domain bounded by $\dunb, S'$ and $V'$ union $c( V' \overline{D}_{unb}) \cup c(S' \overline{D}_{unb} )$, the stable base locus is the locus $\ccc[(3,2)^*] \cup \qqq((2)^*)\lll$.
\smallskip

\item In the domain bounded by $F, \Hh$ and $\HH$ union $c(\Hh \overline{F}) \cup c(\HH \overline{F})$ the stable base locus is $\ccc[(1)^*] \cup \qqq((1)^*)\lll$.
\smallskip

\item  In the domain bounded by $P, U, F$ and $F'$ union $c(U \overline{P}) \cup c(U' \overline{P} )$, the stable base locus is $\ccc[(1,1)^*] \cup \ccc[(2)^*] \cup \qqq((1)^*)\lll$.
\smallskip

\item In the domain bounded by $S, \HH$ and $U$ union $c(U \overline{S}) \cup c(\HH \overline{S})$, the stable base locus is

\noindent $\ccc[(1,1, 1)^*] \cup \qqq((1)^*)\lll$.
\smallskip

\item In the domain bounded by $S', \Hh$ and $U'$ union $c(U' \overline{S}' ) \cup c(\Hh \overline{S}')$, the stable base locus is $\ccc[(3)^*] \cup \qqq((1)^*)\lll$.
\smallskip

\item In the domain bounded by $F, \HH$ and $U$ union $c(UF) \cup c(U\HH)$, the stable base locus is $\ccc[(1,1)^*] \cup \qqq((1)^*)\lll$.
\smallskip

\item In the domain bounded by $F, \Hh$ and $U'$ union $c(U'F) \cup c(U'\Hh)$, the stable base locus is $\ccc[(2)^*] \cup \qqq((1)^*)\lll$.
\smallskip

\item In the domain bounded by $P, S$ and $U$ union $c(PS)$, the stable base locus is $\ccc[(1,1,1)^*] \cup \ccc[(2)^*] \cup \qqq((1,1)^*)\lll$.

\smallskip

\item In the domain bounded by $P, S'$ and $U'$ union $c(PS')$, the stable base locus is $\ccc[(1,1)^*] \cup \ccc[(3)^*] \cup \qqq((2)^*)\lll$.
\smallskip

\item In the domain bounded by $P, V, R$ and $V'$, the stable base locus is $\ccc[(1,1,1)^*] \cup \ccc[(2,2)^*] \cup \ccc[(3)^*] \cup \qqq((1,1)^*)\lll \cup \qqq((2)^*)\lll$.
\smallskip

\item In the domain bounded by $P, S$ and $V$ the stable base locus is $\ccc[(1,1,1)^*] \cup \ccc[(2,2)^*]\cup \qqq((1,1)^*)\lll$.
\smallskip

\item In the domain bounded by $P, S'$ and $V'$, the stable base locus is $\ccc[(3)^*] \cup \ccc[(2,2)^*] \cup \qqq((2)^*)\lll$.
\smallskip

\end{enumerate}
\end{theorem}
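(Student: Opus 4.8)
The plan is to follow the two-sided template of the proof of Theorem~\ref{de-two}. Writing $\operatorname{SB}(D)$ for the stable base locus of a divisor class $D$, I would for each chamber prove the containment $\operatorname{SB}(D)\supseteq(\text{listed locus})$ by exhibiting covering families of curves of negative degree, and the reverse containment by exhibiting effective representatives. Two reductions organize the work. First, as in the degree-two case, the inclusion morphism $\Kgnb{0,0}(G(k,k+3),3)\to\Kgnb{0,0}(G(k,n),3)$ pulls $\Hh,\HH,\Delta$ back to themselves and identifies the two decompositions, so it suffices to argue on $G(k,k+3)$, where by Theorem~\ref{summary}(3) the effective cone is spanned by $\ddeg,\dunb,\Delta$ and every stable base locus lies in $\operatorname{SB}(\ddeg)\cup\operatorname{SB}(\dunb)\cup\operatorname{SB}(\Delta)$. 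Second, the isomorphism $G(k,n)\cong G(n-k,n)$ induces an involution exchanging $\Hh\leftrightarrow\HH$, $S\leftrightarrow S'$, $U\leftrightarrow U'$, $V\leftrightarrow V'$, $\ddeg\leftrightarrow\dunb$ and the dual strata (e.g. $\ccc[(1,1,1)^*]\leftrightarrow\ccc[(3)^*]$); the twenty-two chambers thus come in dual pairs, so I would treat only one chamber per pair. The essential inputs are Lemmas~\ref{disc-P}--\ref{deg-unb}, which compute $\operatorname{SB}$ for the wall classes $P,F,S,S',\ddeg,\dunb$ and $\Delta$, and the base-point-freeness of $\Hh,\HH,T$ from \cite{coskun:grass}.

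For the lower bound I would assemble a dictionary of moving families covering each stratum and record their intersections with $\Hh,\HH,\Delta$, extending the families $B_1,\dots,B_{11}$ already introduced: $B_1$ and its dual cover $\ccc[(1,1,1)^*]$ and $\ccc[(3)^*]$; $B_4,B_5$ cover $\ccc[(1,1)^*],\ccc[(2)^*]$; $B_6$ covers $\qqq((1)^*)\lll$; $B_7$ covers $\ccc[(1)^*]$; $B_9,B_{10}$ and their deformations sweep the boundary; $B_{11}$ and its dual cover $\qqq((1,1)^*)\lll,\qqq((2)^*)\lll$; and the pencils of cubic hypersurfaces double along a fixed $\PP^{k-1}$ from Lemma~\ref{deg-unb} cover $\ccc[(2,2,1)^*]$ and $\ccc[(3,2)^*]$. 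For $D=a\Hh+b\HH+c\Delta$, each covering family $B$ of a stratum $X$ gives the implication $B\cdot D<0\Rightarrow X\subseteq\operatorname{SB}(D)$, and the resulting system of linear inequalities cuts $\RR^3$ into exactly the regions bounded by $T,P,F,S,S',\ddeg,\dunb$ and the auxiliary walls $U,U',V,V',R$. This is precisely how the auxiliary classes enter: each is the hyperplane on which one relevant intersection number changes sign.

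For the upper bound I would, chamber by chamber, exhibit enough non-negative decompositions of a general $D$ in the chamber into wall classes of known stable base locus, together with the base-point-free generators $\Hh,\HH,T$. The inclusions $\operatorname{SB}(D_1+D_2)\subseteq\operatorname{SB}(D_1)\cup\operatorname{SB}(D_2)$ for effective $D_i$ and $\operatorname{SB}(D+A)\subseteq\operatorname{SB}(D)$ for base-point-free $A$ give one upper bound per decomposition, and intersecting over all decompositions trims $\operatorname{SB}(D)$ to the claimed locus. A representative identity that makes this work is $R=P+\ddeg+\dunb$, combined with the $\operatorname{SB}$ of each summand; the more delicate chambers require intersecting, say, a bound from a decomposition through $\ddeg$ with one through $\dunb$, which forces the large strata $\ccc[(2,2,1)^*]$ and $\ccc[(3,2)^*]$ down to their common part. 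Where the formal bounds remain too coarse, I would fall back on the direct avoidance arguments of Lemmas~\ref{disc-P}--\ref{deg-unb}: a map whose Pl\"ucker span or scroll is as large as the chamber permits admits a section of the relevant wall class not vanishing on it, and so lies outside the base locus.

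The main obstacle is twofold. The first difficulty is organizational: twenty-two chambers each require the correct covering families for the lower bound and a sufficient supply of non-negative decompositions for the upper bound, and the final unions are exact only if the incidence relations among strata are controlled precisely---most importantly equalities of the form $\ccc[(2,2,1)^*]\cap\ccc[(3,2)^*]=\ccc[(2,2)^*]$ that produce the intermediate stratum $\ccc[(2,2)^*]$ in chambers~(20)--(22). The second, more substantial, difficulty is that $\ccc[(2,2)^*]$ is the stable base locus of no single computed wall class, so establishing it in the base locus needs a new covering family---for instance a pencil of nodal cubics on a quadric surface contained in a fixed $\Sigma_{(2,2)^*}$---whose intersection numbers with $\Hh,\HH,\Delta$ simultaneously locate the walls $V,V',R$ and certify the required negativity. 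Verifying these stratum-intersection identities (as opposed to the easy containments) and supplying the matching avoidance arguments, together with the analogous handling of the reducible strata $\qqq(\lambda)\lll$, is where I expect the genuine geometric content to lie, beyond the combinatorial assembly.
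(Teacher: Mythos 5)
Your plan is essentially the paper's proof: the same two-sided argument (covering curve families of negative degree for the lower bound, non-negative decompositions into wall classes of known stable base locus plus the base-point-free $\Hh,\HH,T$ for the upper bound), organized by duality and resting on Lemmas \ref{disc-P}--\ref{deg-unb}. You also correctly isolate the genuine new ingredient, a covering family for $\ccc[(2,2)^*]$; the paper's $B_{15}$ (a pencil of twisted cubics on a quadric surface of class $\sigma_{(1,1)^*}+\sigma_{(2)^*}$, with intersection numbers $1,1,4$) is exactly the family your sketch calls for.
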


\begin{proof} The divisors $\Hh, \HH$ and $T$ are base-point free (see \cite{coskun:grass} \S 5). It follows that in the closed cone generated by these divisors the stable base locus is empty. Let $D= a\Hh + b \HH + c \Delta$ be an effective divisor. If curves with class $B$ cover a subvariety $X$ of $\Kgnb{0,0}(G(k,n),3)$ and $B \cdot D < 0$, then $X$ has to be contained in the base locus of $D$. Conversely, if $D$ can be expressed as a non-negative linear combination of $D'$ and base-point-free divisors, then the stable base locus of $D$ is contained in the stable base locus of $D'$. Using these two observations repeatedly we can determine the stable base locus decomposition. \smallskip

\noindent $\bullet$ The boundary. Recall from the proof of Lemma \ref{disc-P} that $B_3$ is the class of the curve in $\Kgnb{0,0}(G(k,n),3)$ induced by a pencil of lines in $\Sigma_{(1,1)^*}$ union a fixed conic attached at the base point. $B_3 \cdot D = a-c$.
Similarly, let $B_{12}$ be the class of the curve  in $\Kgnb{0,0}(G(k,n), 3)$ induced by a pencil of lines in $\Sigma_{(2)^*}$ union a fixed conic. $B_{12} \cdot D = b-c$. Since curves in the class $B_3$ and $B_{12}$ cover the boundary divisor, we conclude that the boundary divisor is in the stable  base locus of $D$ whenever $a<c$ or $b<c$.  Equivalently, the boundary divisor is in the stable base locus of
$D$ if $D$ is in the complement of the closed cone generated by $\dunb, \ddeg$ and $T$. Conversely, since $T$ is base point free, the stable base locus of  any divisor in  the closed cone generated by $\dunb, \ddeg$ and $T$ must be contained in the union of  the stable base loci of $\ddeg$ and $\dunb$. Therefore, the boundary divisor is contained in the stable base locus of $D$ if and only if $D$ is in the complement of the closed cone generated by $\dunb, \ddeg$ and $T$.
\smallskip

\noindent $\bullet$ $\qqq((1,1)^*)\lll$ and $\qqq((2)^*)\lll$. Recall from the proof of Lemma \ref{disc-S} that $B_{11}$ is the curve in $\Kgnb{0,0}(G(k,n),3)$ obtained by taking a pencil of conics in $\Sigma_{(1,1)^*}$ union a fixed line at a base point of a pencil. Since curves in the same class as $B_{11}$ cover $\qqq((1,1)^*) \lll$ and $B_{11} \cdot D = a + 2c$, we conclude that $\qqq((1,1)^*) \lll$ is in the stable base locus of $D$ if $a< -2c$. By replacing $\Sigma_{(1,1)^*}$ with $\Sigma_{(2)^*}$ in this discussion,   we conclude that $\qqq((2)^*) \lll$ is in the stable base locus of $D$ if $b<-2c$.  Conversely, $\qqq((1,1)^*)\lll$ (resp., $\qqq((2)^*)\lll$)  is not contained in the stable base locus of $\dunb$ (resp., $\ddeg$). We conclude that $\qqq((1,1)^*)\lll$ is contained in the stable base locus of $D$ if and only if $D$ is in the complement of the closed cone spanned by $\dunb, \HH$ and $T$. Similarly, $\qqq((2)^*)\lll$ is in the stable base locus of $D$ if and only if $D$ is in the complement of $\ddeg, \Hh$ and $T$.
\smallskip

\noindent $\bullet$ $\qqq((1)^*)\lll$. During the proof of Lemma \ref{disc-P}, we showed that $\qqq((1)^*)\lll$ is in the stable base locus of $D$ if $c<0$.  It follows that $\qqq((1)^*)\lll$ is in the stable base locus of $D$ if and only if $D$ is in the complement of the closed cone generated by $\Hh, \HH$ and $T$.
\smallskip

\noindent $\bullet$ $\ccc[(3,2)^*]$ and $\ccc[(2,2,1)^*]$. We would like to show that if $D$ is an effective divisor in the complement of the closed cone generated by $\dunb, S, \Delta$ (respectively, in the complement of the closed cone generated by $\ddeg, S', \Delta$), then $\ccc[(2,2,1)^*]$ (respectively, $\ccc[(3,2)^*]$) is in the stable base locus of $D$.  We define two families of cubic surface scrolls in $\PP^4$. Fix a pencil of conics in $\PP^2$ and a general line $l$ in $\PP^4$. Fix three points $p_1, p_2, p_3$ on the line and three of the base points $q_1, q_2, q_3$ of the pencil of conics. For each member $C_t$ of the pencil of conics, there exists a unique cubic scroll containing $C_t, l$ and the lines $l_{p_i, q_i}$ joining $p_i$ to $q_i$. Let $\mathcal{F}_1$ be the corresponding family of cubic scrolls. Note that $\mathcal{F}_1$ has three reducible members; all of the scrolls in $\mathcal{F}_1$ are non-degenerate; and the directrix of the scrolls $l$ does not vary in the family. Take the cone with a fixed vertex $\PP^{k-3}$ to obtain a family of cubic scrolls of dimension $k$. This family induces a curve with class $B_{13}$ in $\Kgnb{0,0}(G(k,n), 3)$. We claim that
$$B_{13} \cdot \Hh= 1, \ \ B_{13} \cdot \HH= 2, \ \ B_{13} \cdot \Delta = 3$$
The last equality is clear since the family has three reducible elements.
The family $\mathcal{F}_1$ induces a curve with class $B_{13}'$ in $\Kgnb{0,0}(G(2,5),3)$. It is straightforward to see that $B_{13} \cdot \Hh = B_{13}' \cdot \Hh'$ and $B_{13} \cdot \HH = B_{13}' \cdot \HH '$, where the primes denote that the intersection is taking place in $\Kgnb{0,0}(G(2,5),3)$. Since in the family $\mathcal{F}_1$ the members are non-degenerate and the directrices are constant, we have the intersection numbers $B_{13}' \cdot \ddeg ' = B_{13}' \cdot \dunb ' = 0$ in $\Kgnb{0,0}(G(2,5),3)$. The classes of these divisors are calculated in \cite{coskun:grass} (see also the next section). Solving for the coefficients we obtain the claimed equalities.
Next, take a general projection of the scroll $S_{2,2}$ to $\PP^4$.  Recall that the scroll $S_{2,2}$ is the embedding of $\PP^1 \times \PP^1$ in $\PP^5$ under the complete linear system $\OO_{\PP^1 \times \PP^1}(1,2)$. Take a general line $l$ in $\PP^4$ and fix an isomorphism between $l$ and the conics in $S_{2,2}$ and let $S_{1,2,2}$ be the scroll generated by taking the spans of the points under this isomorphism. The scroll $S_{1,2,2}$ gives rise to a one-parameter family $\mathcal{F}_2$ of cubic scrolls in $\PP^4$.  In the family $\mathcal{F}_2$ none of the members are reducible and the directrices of all the cubic scrolls are $l$. Taking the cone over $\mathcal{F}_2$ with a fixed vertex $\PP^{k-3}$ induces a curve with class $B_{14}$ in $\Kgnb{0,0}(G(k,n),3)$. We have the following intersection numbers:
$$B_{14} \cdot \Hh = 1,  \ \ B_{14} \cdot \HH = 5, \ \ B_{14} \cdot \Delta = 0$$
Since the degree of the cone over $S_{1,2,2}$ is five and the family does not have any reducible elements, the last two equalities are immediate. The first equality can be computed, as in the previous case, by noting that $\mathcal{F}_2$ induces a curve $B_{14}'$ in $\Kgnb{0,0}(G(2,5),3)$ satisfying the equalities $B_{14} \cdot \Hh = B_{14}' \cdot \Hh$ and $B_{14}' \cdot \dunb' = 0$.
Since curves in the class $B_{13}$ and $B_{14}$ cover the locus $\ccc[(3,2)^*]$, we conclude that if the effective divisor $D$ satisfies $a+5b<0$ or $a + 2b + 3c<0$, then $\ccc[(3,2)^*]$ is in the stable base locus of $D$. By duality, if $5a+b<0$ or $2a+b+3c<0$, then the locus $\ccc[(2,2,1)^*]$ is in the stable base locus of $D$. Conversely, by Lemmas \ref{disc-S} and \ref{deg-unb}, $\ccc[(2,2,1)^*]$ is not contained in the union of the stable base loci of  $\dunb, S$ and $\Delta$ and $\ccc[(3,2)^*]$ is not contained in the union of the stable base loci of $\ddeg, S'$ and $\Delta$. We conclude that  $\ccc[(2,2,1)^*]$ (respectively, $\ccc[(3,2)^*]$) is in the stable base locus of $D$ if and only if $D$ is an effective divisor in the complement of the closed cone generated by $\dunb, S, \Delta$ (respectively, in the complement of the closed cone generated by $\ddeg, S', \Delta$).
\smallskip

\noindent $\bullet$ $\ccc[(2,2)^*]$. Fix a linear space $\Lambda$ of dimension $k-2$ disjoint from a four dimensional linear space $\Gamma$. Let $\phi: G(2,4) \rightarrow G(k,n)$ be the morphism obtained by taking the span of any two dimensional linear space in $\Gamma$ with $\Lambda$ and considering the resulting subspaces as a subspace of the $n$-dimensional ambient vector space. A codimension two linear section of $G(2,4)$ in its Pl\"{u}cker embedding maps to a quadric surface in the Pl\"{u}cker embedding of $G(k,n)$ of class $\sigma_{(1,1)^*} + \sigma_{(2)^*}$. Consider a pencil of twisted cubics on this quadric surface and let $B_{15}$ be its class. The following intersection numbers are easy to compute:

$$B_{15} \cdot \Hh=1, \ \  B_{15} \cdot \HH = 1, \ \ B_{15} \cdot \Delta = 4$$
Since curves with class $B_{15}$ cover the locus $\ccc[(2,2)^*]$, we conclude that $\ccc[(2,2)^*] $ is in the base locus of $D$ if $a+b+4c<0$. On the other hand, $\ccc[(2,2)^*]$ is not in the union of the stable base loci of $S, S'$ and $\Delta$. We conclude that $\ccc[(2,2)^*]$ is in the base locus of $D$ if and only if $D$ is in the complement of the closed cone generated by $S, S'$ and $\Delta$.
\smallskip

\noindent $\bullet$ $\ccc[(1,1,1)^*]$ and $\ccc[(3)^*]$. Recall from the proof of Lemma \ref{disc-P} that $B_1$ (respectively, $B_2$) are the classes of the curves in $\Kgnb{0,0}(G(k,n),3)$ induced by a pencil of twisted cubics on a quadric surface contained in $\Sigma_{(1,1,1)^*}$ (respectively, $\Sigma_{(3)^*}$). Curves in the class $B_1$ (respectively, $B_2$) cover $\ccc[(1,1,1)^*]$ (respectively, $\ccc[(3)^*]$). Since $B_1 \cdot D = 2a+4c$ and $B_2 \cdot D = 2b + 4c$, we conclude that if $a<-2c$ (respectively, $b<-2c$), then $\ccc[(1,1,1)^*]$ (respectively, $\ccc[(3)^*]$) is in the stable base locus of $D$. Next, consider a general  projection of the third Veronese embedding of $\PP^2$ in $\PP^9$ to $\PP^3$. The image of a pencil of lines in $\PP^2$ under this map gives rise to a one-parameter family $\mathcal{F}$ of rational cubics in $\PP^3$. Let $B_{16}$ (respectively, $B_{17}$) be the classes of the curves in $\Kgnb{0,0}(G(k,n),3)$ that are induced by taking the family $\mathcal{F}$ in $\Sigma_{(1,1,1)^*}$  (respectively, $\Sigma_{(3)^*}$). The following intersection numbers are easy to compute:  $$B_{16} \cdot \Hh = 9,  \ \ B_{16} \cdot \HH = 0, \  \ B_{16} \cdot \Delta = 0$$
$$B_{17} \cdot \Hh = 0,  \ \ B_{17} \cdot \HH = 9, \  \ B_{17} \cdot \Delta = 0$$
Since curves in the class $B_{16}$ (respectively, $B_{17}$) cover $\ccc[(1,1,1)^*]$ (respectively, $\ccc[(3)^*]$), we conclude that $\ccc[(1,1,1)^*]$ (respectively, $\ccc[(3)^*]$) is in the stable base locus of $D$ if $a<0$ (respectively, $b<0$).  In summary, we conclude that $\ccc[(1,1,1)^*]$ is in the stable base locus of the divisors contained in the complement of the closed cone generated by $ \dunb, \HH$ and $\Delta$. Similarly, $\ccc[(3)^*]$ is in the stable base locus of the divisors contained in the complement of the closed cone generated by $\ddeg, \Hh$ and $\Delta$.
\smallskip

\noindent $\bullet$ $\ccc[(1,1)^*]$ and $\ccc[(2)^*]$. The proof of Lemma \ref{disc-P} shows that if $a + 5c<0$ (respectively, $b+5c <0$), then $\ccc[(1,1)^*]$ (respectively, $\ccc[(2)^*]$) is contained in the base locus of $D$. $\ccc[(1,1)^*]$ is not contained in the union of the stable base loci of $S'$ and $\Delta$. Similarly, $\ccc[(2)^*]$ is not contained in the union of the stable base loci of $S$ and $\Delta$. We conclude that $\ccc[(1,1)^*]$ (resp., $\ccc[(2)^*]$) is in the stable base locus of $D$ if and only if $D$ is in the complement of the closed cone spanned by $S', \HH$ and $\Delta$ (resp., $S, \Hh$ and $\Delta$).
\smallskip

\noindent $\bullet$ $\ccc[(1)^*]$. The proof of Lemma \ref{disc-F} shows that if $c<0$, then the locus of maps that have a component mapping multiple-to-one onto a line is in the stable base locus of $D$. Take a smooth quadric surface in $\Sigma_{(1,1,1)^*}$ or $\Sigma_{(3)^*}$. Fix a three-to-one map from $\PP^1$ to $\PP^1$ and map $\PP^1$ to each member of one of the rulings of the quadric surface. The induced curves $B_{18}$ and $B_{19}$, respectively, have the following intersection numbers:
$$B_{18} \cdot \Hh = 2, \ \ B_{18} \cdot \HH = 0, \ \ B_{18} \cdot \Delta = 0  $$
$$B_{19} \cdot \Hh = 0, \ \ B_{19} \cdot \HH = 2, \ \ B_{19} \cdot \Delta = 0  $$
We conclude that $\ccc[(1)^*]$ is in the stable base locus of $D$ if and only if $D$ is contained in the complement of the closed cone spanned by $\Hh, \HH$ and $\Delta$.
\smallskip

We now combine these observations to conclude the proof of the theorem.
Let $D$ be a divisor contained in the closed cone spanned by $\Delta, \Hh$ and $\HH$ but not contained in the closed cone spanned by $\Hh, \HH$ and $T$. Since $\Hh$ and $\HH$ are base-point-free, the stable base locus of $D$ has to be contained in the boundary divisor. By the discussion of the boundary divisor, the stable base locus of $D$ contains the boundary divisor. We conclude that the stable base locus of $D$ is equal to the boundary divisor. \smallskip

If $D$ is a divisor in the domain bounded by $S, \Delta$ and $\HH$ union $c(\Delta S)$, then the stable base locus of $D$ is contained in the union of the stable base loci of $\Delta$ and $S$. We conclude that the stable base locus of $D$ is $\ccc[(1,1,1)^*]$ union the boundary divisor. Similarly, if $D$ is a divisor in the domain bounded by $S', \Delta$  and $\Hh$ union $c(\Delta S')$, then the stable base locus of $D$ is $\ccc[(3)^*]$ union the boundary divisor.
\smallskip

Suppose $D$ is in the region bounded by $\ddeg, \Delta$ and $S$ union $c(\Delta \ddeg)$ (respectively, in the region bounded by $\dunb, \Delta$ and $S'$ union $c(\Delta \dunb)$). Then the stable base locus of $D$ has to be contained in the union of the stable base loci of $\ddeg$ and $\Delta$ (respectively, $\dunb$ and $\Delta$).  We deduce that in the region bounded by $\ddeg, \Delta$ and $S$ union $c(\Delta \ddeg)$, the stable base locus is equal to $\ccc[(2,2,1)^*]$ union the boundary divisor. In the region bounded by $\dunb, \Delta$ and $S'$ union $c(\Delta \dunb)$, the stable base locus is the union of $\ccc[(3,2)^*]$ and the boundary divisor.
\smallskip

Similarly, if $D$ is in the region bounded by $\ddeg, S$ and $V$ union $c(\overline{D}_{deg} S) \cup c(\overline{D}_{deg} V)$ (respectively, $\dunb, S'$ and $V'$ union $c(\overline{D}_{unb} S') \cup c(\overline{D}_{unb} V')$), then the stable base locus of $D$ has to be a subset of the stable base locus of $\ddeg$ (respectively, $\dunb$).  This follows from the fact that $D$ is a non-negative linear combination of $\ddeg$ (respectively, $\dunb$) and base-point-free divisors $\Hh$ and $\HH$. We conclude that these stable base loci are $\ccc[(2,2,1)^*] \cup \qqq((1,1)^*)\lll$ (respectively, $\ccc[(3,2)^*]\cup \qqq((2)^*)\lll$).
An almost identical argument shows that if $D$ is  in the region bounded by $\ddeg, \dunb$ and $R$ union $c(\ddeg \dunb)$, then the stable base locus of $D$ is $\ccc[(2,2,1)^*] \cup \ccc[(3,2)^*] \cup \qqq((1,1)^*)\lll \cup \qqq((2)^*)\lll$.

\smallskip

If $D$ is in the region bounded by $\ddeg, R$ and $V$ union $c(R \ddeg)$, then the stable base locus of $D$ is contained in the union of the stable base loci of $\ddeg$ and $S'$ since every divisor in this region can be expressed as a non-negative linear combination of $\ddeg, S'$ and base-point-free divisors. Similarly, if $D$ is in the region generated by $\dunb, R$ and $V'$ union $c(R \dunb)$, then the stable base locus is contained in the union of the stable base loci of $S$ and $\dunb$. We conclude that in the region generated by $\ddeg, R$ and $V$ union $c(R \ddeg)$, the stable base locus is $\ccc[(2,2,1)^*] \cup \ccc[(3)^*] \cup  \qqq((2)^*)\lll$. In the region generated by $\dunb, R$ and $V'$ union $c(R \dunb)$, the stable base locus is $\ccc[(3,2)^*] \cup \ccc[(1,1,1)^*] \cup \qqq((1,1)^*)\lll$.
\smallskip

In the region bounded by $F, \HH$ and $\Hh$ union $c(\Hh \overline{F}) \cup c(\HH \overline{F})$, the stable base locus has to be contained in the stable base locus of $F$. We conclude that the stable base locus is $\ccc[(1)^*] \cup \qqq((1)^*)\lll$.
\smallskip

In the domain bounded by $S, U$ and $\HH$ union $c(U \overline{S}) \cup c(\HH \overline{S})$ the stable base locus has to be contained in that of $S$. We conclude that the stable base locus is $\ccc[(1,1,1)^*] \cup \qqq((1,1)^*)\lll$. Similar considerations apply for $S', U'$ and $\Hh$ union $c(U' \overline{S}') \cup c(\Hh \overline{S}')$. The stable base locus in the domain bounded by $F, U$ and $\HH$ union $c(UF) \cup c(U \HH)$ is contained in the stable base locus of $U$ that is contained in the intersection of the stable base loci of $P$ and $S$. We conclude that the stable base locus is $\ccc[(1,1)^*] \cup \qqq((1)^*)\lll$. Similar considerations apply to the domain bounded by $F, U'$ and $\Hh$ union $c(U'F) \cup c(U' \Hh)$.
\end{proof}
\smallskip

\begin{remark}\label{remark-cube}
The proof of Theorem \ref{de-three} also leads to a detailed description of the birational models of $\Kgnb{0,0}(G(k,n),3)$. The model corresponding to $T$ is the moduli space of weighted stable maps $\Kgnb{0,0}(G(k,n),2,1)$ obtained as a divisorial contraction of $\Kgnb{0,0}(G(k,n),3)$ that contracts the boundary divisor. The morphism $\phi_T$ collapses  the locus of maps with reducible domain that have the same degree two component and  the same node, remembering only the degree two component and the point of attachment. For $D \in c(\Hh T)$ or $D \in c(\HH T)$, the models give two other divisorial contractions of the boundary divisor that further admit small contractions to $\Kgnb{0,0}(G(k,n),2,1)$.
The model corresponding to a divisor in $D \in c(\Hh \HH)$ is the normalization of the Chow variety. For such $D$, $\phi_D$ is a small contraction that contracts the locus of maps that have a component multiple-to-one onto their image remembering only the image and the multiplicity.  The normalization of the Chow variety admits two further contractions (corresponding to the divisors $\Hh$ and $\HH$) that are themselves Chow varieties formed with respect to the codimension two classes $\sigma_{1,1}$ and $\sigma_2$. The flip is a divisorial contraction of the Hilbert scheme contracting the divisor of nodal cubics by forgetting the embedded structure.
\end{remark}

\section{Degree three maps to Grassmannians of lines}\label{sec-three-line}

The discussion in the previous section does not apply to Grassmannians $G(k,n)$ when $k=2$.
In this section, we describe the necessary modifications for understanding the stable base locus decomposition for $\Kgnb{0,0}(G(2,n), 3)$, where $n \geq 5$. The class $\dunb$ need to be modified and the effective cone is no longer symmetric under interchanging $\sigma_{1,1}$ and $\sigma_2$.  The description and base loci of the divisor classes $S, P$ and $F$ remain unchanged. The divisors  described in \S 2 have the following expressions (see \cite{coskun:grass}).

$$T = \frac{2}{3}(\Hh + \HH + \Delta).$$

$$D_{deg} = \frac{1}{3}( - \Hh + 2 \HH - \Delta).$$

$$D_{unb} = \frac{1}{3} (5 \Hh - \HH - \Delta).$$
As in the previous section, the divisor class $P$ is defined as the pull-back of $\OO_{G(4,N)}(1)$ under the rational map that sends $f \in \Kgnb{0,0}(G(2,n),3)$ to the span of the image of $f$ in the Pl\"{u}cker embedding of $G(2,n)$. Intersecting $P$ with the test families obtained by taking  a pencil of conics in $\Sigma_{(1,1)^*}$ (or $\Sigma_{(2)^*}$) union a fixed line containing a base point and a pencil of lines in $\Sigma_{(1,1)^*}$ union a fixed conic attached at a base point, we see that
$$P = \frac{1}{3}(2\Hh + 2\HH - \Delta).$$
As long as the image of $f$ spans a three-dimensional projective space in the Pl\"{u}cker embedding of $G(2,n)$, $f$ is not contained in the base locus of $P$. Conversely, the argument given in the previous section shows that if the image of $f$ spans a linear space of dimension less than three, then $f$ is in the stable base locus of $P$. We conclude  that the stable base locus of $P$ is $\ccc[(1,1)^*] \cup \ccc[(2)^*] \cup \qqq((1)^*) \lll$.

Define the divisor class $F$ as the pull-back of the corresponding divisor in $\Kgnb{0,0}(\PP^N,3)$ introduced in \cite{dawei:cubic}. Then, by the argument given in Lemma \ref{disc-F}, $$F=  \frac{1}{3}(5\Hh + 5\HH  -\Delta)$$ and the stable base locus of $F$ is $\ccc[(1)^*] \cup \qqq((1)^*)\lll$. Define the divisor $S$ as in the previous section. The arguments  in Lemma \ref{disc-S} show that the class $S$ is given by
$$ S = \frac{1}{3}(-\Hh + 5 \HH - \Delta)$$
and the stable base locus of $S$ is equal to $\ccc[(1,1)^*] \cup \qqq((1,1)^*)\lll$. Finally, observe that the stable base locus of $\dunb$ is $\ccc[(3)^*] \cup \qqq((2)^*) \lll$ and the stable base locus of $\ddeg$ is $\ccc((2,1)^*) \cup \qqq((1,1)^*)\lll$.  First, suppose the domain of the stable map  $f$ is irreducible. As long as the pull-back of the tautological bundle of $G(2,n)$ has splitting type $(1,2)$, then $f$ is not in the indeterminacy locus of the map  $\phi$ defined in Section \ref{prelim}. Similarly, if the domain of $f$ has two components and the pull-back of the tautological bundle to the component of degree two has splitting type $(1,1)$, then $f$ is not in the indeterminacy locus of $\phi$. It follows that in both cases $f$ is not in the base locus of $\dunb$. If the domain of $f$ has three or four components, then the image could either consist of three concurrent lines or three non-concurrent lines where one line intersects the other two. It is easy to see that if the common point of intersection of the lines parameterize by two of the lines coincide, then $f$ is contained in $\qqq((2)^*)\lll$ and otherwise $f$ is not in the base locus of $\dunb$. An argument similar to the one in Lemma \ref{deg-unb} shows that $\ccc[(3)^*]\cup \qqq((2)^*)\lll$ is in the stable base locus of $\dunb$. The claim follows. The discussion of $\ddeg$ is similar.
\smallskip

\begin{notation}
Set $$ U = 2\Hh + 5\HH - \Delta, \ \ \ \mbox{and} \ \ \ U' = 5 \Hh + 2\HH - \Delta.$$
\end{notation}

The stable base locus decomposition of the effective cone of $\Kgnb{0,0}(G(2,n),3)$ has 15 chambers which are described in the following theorem. Figure \ref{fig:cublindecomp} depicts a cross-section of the effective cone.

\begin{figure}[ht!] %  figure placement: here, top, bottom, or page
   \centering
   \includegraphics[width=5in]{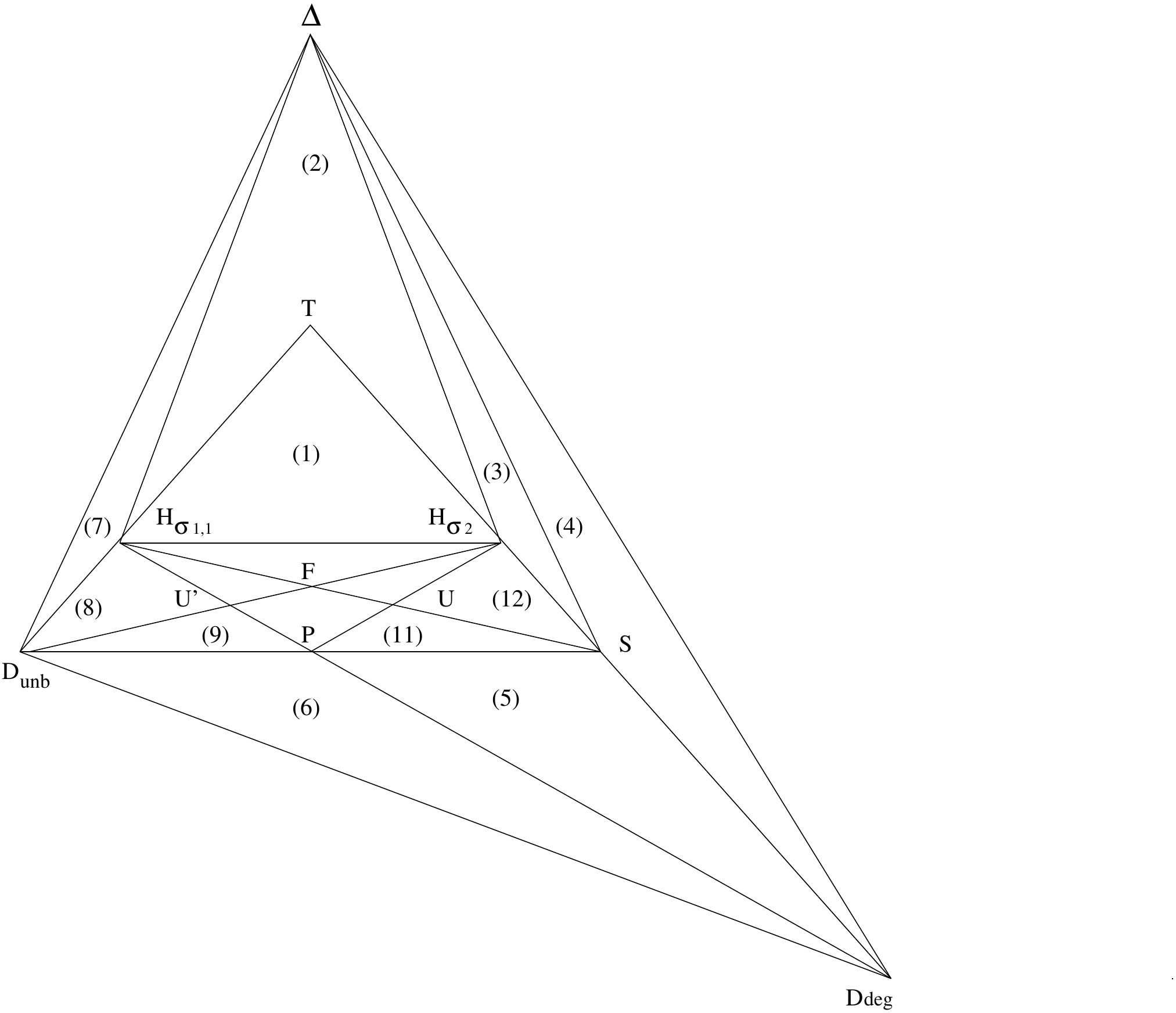}
   \caption{The stable base locus decomposition of $\Kgnb{0,0}(G(2,n),3)$.}
   \label{fig:cublindecomp}
\end{figure}

\begin{theorem}\label{de-three-line}
The stable base locus decomposition of the Neron-Severi space of $\Kgnb{0,0}(G(2,n), 3)$, with $n\geq 5$,  is given as follows:
\begin{enumerate}
\item In the closed cone spanned by  $\Hh, \HH $ and $T$,  the stable base locus is empty.
\smallskip

\item In the domain bounded by $\Delta, \Hh, T$ and $\HH$ union $c(\Hh \overline{\Delta}) \cup c(\HH \overline{\Delta})$, the stable base locus is equal to the boundary divisor.
\smallskip

\item In the domain bounded by $\HH, \Delta$ and $S$ union $c(\Delta S)$,  the stable base locus is  the union of $\ccc[(1,1)^*]$  and the boundary divisor.
\smallskip

\item In the domain bounded by $\ddeg, S$ and $\Delta$ union $c(\Delta D_{deg})$, the stable base locus is the union of  $\ccc[(2,1)^*]$ and the boundary divisor.
\smallskip

\item In the domain bounded by $D_{deg}, P$ and $S$ union $c(P \overline{D}_{deg}) \cup c(S \overline{D}_{deg})$,  the stable base locus consists of $\ccc[(2,1)^*] \cup \qqq((1,1)^*)\lll$.
\smallskip

\item In the domain bounded by $D_{unb}, D_{deg}$ and $P$ union $c(D_{deg} D_{unb})$, the stable base locus consists of the union $\ccc[(3)^*] \cup \qqq((2)^*)\lll \cup \ccc[(2,1)^*] \cup \qqq((1,1)^*)\lll$.
\smallskip

\item In the domain bounded by $D_{unb}, \Hh$ and $\Delta$ union $c(D_{unb}\Delta)$,  the stable base locus consists of  $\ccc[(3)^*]$ and the boundary divisor.
\smallskip

\item In the domain bounded by $D_{unb}, \Hh$ and $U'$ union $c(\Hh \overline{D}_{unb}) \cup c(U' \overline{D}_{unb})$, the stable base locus is $\ccc[(3)^*] \cup \qqq((2)^*)\lll$.
\smallskip

\item In the domain bounded by $D_{unb}, P$ and $U'$ union $c(P \dunb)$, the stable base locus is $\ccc[(3)^*] \cup \qqq((2)^*)\lll \cup \ccc[(1,1)^*]$
\smallskip

\item In the domain bounded by $P, U, F$ and $U'$ union $c(U\overline{P}) \cup c(U'\overline{P})$, the stable base locus consists of $\ccc[(1,1)^*]\cup\ccc[(2)^*]\cup \qqq((1)^*)\lll$.
\smallskip

\item In the domain bounded by $S, P$ and $U$ union $c(PS)$, the stable base locus consists of $\ccc[(1,1)^*]\cup \ccc[(2)^*] \cup \qqq((1,1)^*)\lll$.
\smallskip

\item In the domain bounded by $S, U$ and $\HH$  union $c(\HH \overline{S}) \cup c(U' \overline{S})$,  the stable base locus consists of $\ccc[(1,1)^*)  \cup \qqq((1,1)^*)\lll$.
\smallskip

\item In the domain bounded by $U,F$ and $\HH$ union $c(U\HH) \cup c(UF)$, the stable base locus consists of $\ccc[(1,1)^*) \cup \qqq((1)^*)\lll$.
\smallskip

\item In the domain bounded by $\Hh, F$ and $U'$ union $c(U'F) \cup c(U'\Hh)$, the stable base locus consists of $\ccc[(2)^*] \cup \qqq((1)^*)\lll$.
\smallskip

\item In the domain bounded by $\Hh, \HH$ and $F$ union $c(\Hh \overline{F}) \cup c(\HH \overline{F})$, the stable base locus consists of $\ccc[(1)^*] \cup \qqq((1)^*)\lll$.

\end{enumerate}
\end{theorem}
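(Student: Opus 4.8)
The plan is to run the same two-sided argument used for Theorem \ref{de-three}, adapted to the asymmetric $k=2$ situation. By Theorem \ref{summary}(3) the effective cone of $\Kgnb{0,0}(G(2,n),3)$ is spanned by $\ddeg$, $\dunb$ and $\Delta$, so the stable base locus of every effective class is contained in the union of the stable base loci of these three generators; these, together with the base loci of $P$, $F$ and $S$, have already been computed in the discussion preceding the statement. The task is therefore to pin down, for a general class $D=a\Hh+b\HH+c\Delta$, exactly which of the finitely many loci $\ccc[(1)^*]$, $\ccc[(2)^*]$, $\ccc[(1,1)^*]$, $\ccc[(3)^*]$, $\ccc[(2,1)^*]$, $\qqq((1)^*)\lll$, $\qqq((1,1)^*)\lll$, $\qqq((2)^*)\lll$ and the boundary divisor occur in its base locus. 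A structural simplification to exploit throughout is that each $\qqq(\,\cdot\,)\lll$ locus parametrizes maps with reducible domain and hence lies inside the boundary divisor; consequently these loci appear explicitly only in the chambers where the boundary itself is absent from the base locus, and are silently absorbed elsewhere. This already explains the apparent asymmetry in the way the loci are listed across the fifteen chambers.

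For the membership (lower-bound) direction I would reuse the test curves constructed in the proofs of Lemmas \ref{disc-P} and \ref{disc-S} and of Theorem \ref{de-three}, recomputing their intersection numbers with $\Hh,\HH,\Delta$ in the present setting (most constructions do not see whether $k=2$ or $k>2$). Concretely, the curves $B_3$ and $B_{12}$ give $B_3\cdot D=a-c$ and $B_{12}\cdot D=b-c$ and cover the boundary; the curve $B_{11}$ and its $\Sigma_{(2)^*}$-analogue give $a+2c$ and $b+2c$ and cover $\qqq((1,1)^*)\lll$ and $\qqq((2)^*)\lll$; the double-cover family forces $\qqq((1)^*)\lll$ into the base locus exactly when $c<0$; and the nodal-cubic families $B_4,B_5$ give $B_4\cdot D=a+5c$ and $B_5\cdot D=b+5c$, covering $\ccc[(1,1)^*]$ and $\ccc[(2)^*]$. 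One checks directly that these last two inequalities cut out precisely the walls $U'$ and $U$ (since $U'=5\Hh+2\HH-\Delta$ satisfies $a+5c=0$ and $U=2\Hh+5\HH-\Delta$ satisfies $b+5c=0$), while the twisted-cubic-on-a-quadric and projected-Veronese families in $\Sigma_{(3)^*}$ cover $\ccc[(3)^*]$ and those in the degenerate configurations cover $\ccc[(2,1)^*]$. Each covering curve thus contributes one half-space, and the membership conditions organize into complements of explicitly described subcones cut out by the rays $\Hh,\HH,T,P,F,S,\ddeg,\dunb,U,U'$.

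For the non-membership (upper-bound) direction I would, chamber by chamber, write a general $D$ as a non-negative combination of the base-point-free classes $\Hh,\HH,T$ and the generators $P,F,S,\ddeg,\dunb$ whose base loci are known, and conclude that the base locus of $D$ is contained in the union of the corresponding known loci. For instance, in the region bounded by $\ddeg, P$ and $S$, every class is a non-negative combination of $\ddeg$ and base-point-free divisors, so its base locus lies in that of $\ddeg$, namely $\ccc[(2,1)^*]\cup\qqq((1,1)^*)\lll$; comparing this containment with the membership inequalities of the previous paragraph forces equality and yields chamber (5). The remaining chambers are treated identically, with $U$ and $U'$ appearing as the rays along which the $P$-controlled chambers meet the chambers controlled by $S$, $F$, $\Hh$ and $\HH$.

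The main obstacle is the loss of the $\sigma_{1,1}\leftrightarrow\sigma_2$ symmetry: because $\ddeg=\tfrac13(-\Hh+2\HH-\Delta)$ and $\dunb=\tfrac13(5\Hh-\HH-\Delta)$ are no longer interchanged by an automorphism, the duality that halved the work in Theorem \ref{de-three} is unavailable, and every chamber on the $\dunb$ side (those governed by $\ccc[(3)^*]$ and $\qqq((2)^*)\lll$) must be verified by hand rather than deduced from an $\ddeg$-counterpart. The genuinely delicate point is the bookkeeping that reconciles the two bounds: one must confirm that the half-spaces produced by the covering curves and the containments produced by the cone decompositions agree precisely along the fifteen walls drawn in Figure \ref{fig:cublindecomp}, so that the lower and upper bounds coincide in the interior of every chamber and no spurious locus survives on the walls themselves.
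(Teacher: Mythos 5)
Your proposal follows essentially the same route as the paper's proof: lower bounds from moving test curves with negative intersection (reusing $B_3$, $B_{12}$, $B_{11}$, $B_4$, $B_5$, $B_6$, $B_7$, $B_{15}$ and the projected-Veronese families), upper bounds from writing each class as a non-negative combination of base-point-free divisors and the generators $P$, $F$, $S$, $\ddeg$, $\dunb$ with known base loci, and a correct identification of the walls $U$ and $U'$ with the vanishing of $b+5c$ and $a+5c$. The only substantive detail the paper supplies that your sketch leaves implicit is the explicit construction of the two new covering families needed on the asymmetric side --- a pencil of cubic surfaces in $\PP^3$ with a fixed double line (covering $\ccc[(2,1)^*]$ and giving the wall $5a+b=0$ through $S$ and $\Delta$) and the cone over a pencil of twisted cubics on a quadric (covering $\ccc[(3)^*]$ and giving the wall $b+2c=0$ through $\Hh$ and $U'$) --- which is exactly the ``verify by hand'' step you flag.
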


\begin{proof}
The proof of this theorem is very similar but easier than the proof of Theorem \ref{de-three}. Hence, we briefly sketch it and leave most of the details to the reader. The divisors $\Hh, \HH$ and $T$ are base-point-free, therefore, in the closed cone generated by these divisors the stable base locus is empty. Let $D= a\Hh + b \HH + c \Delta$. The curve classes $B_3$ and $B_{12}$ from the proof of Theorem \ref{de-three} show that the boundary divisor is in the stable base locus of any effective divisor contained in the complement of the closed cone generated by $\dunb, \ddeg$ and $T$. Since $\Hh$ and $\HH$ are base-point-free, in the domain bounded by $\Hh, T, \HH$ and $\Delta$ union $c(\Hh \overline{\Delta}) \cup c(\HH \overline{\Delta})$ the stable base locus consists of the boundary divisor. The curve induced by taking  the image of a general pencil of lines in the projection of the third Veronese embedding of $\PP^2$ to a plane Poincar\'{e} dual  to the class $\sigma_{(1,1)^*}$ shows that $\ccc[(1,1)^*]$ is in the stable base locus of $D$ if $a<0$. Hence, in the domain bounded by $S, \Delta$ and $\HH$ union $c(\Delta S)$, the stable base locus is the union of $\ccc[(1,1)^*]$ and the boundary divisor. Similarly, the curve induced by taking the image of a pencil of lines in the projection of the third Veronese embedding of $\PP^2$ to a $\PP^3$ Poincar\'{e} dual to the class $\sigma_{(3)^*}$ shows that $\ccc[(3)^*]$ is in the stable base locus of $D$ if $b<0$. We conclude that in the region bounded by $\dunb, \Delta$ and $\Hh$ union $c(\dunb \Delta)$, the stable base locus is the union of $\ccc[(3)^*]$ and the boundary divisor.
Let $A_1$ be the curve class induced in $\Kgnb{0,0}(G(2,n),3)$ by a pencil of cubic surfaces in $\PP^3$ with a fixed double line and 8 general base points. Then  $$A_{1} \cdot \Hh = 5, \ \ A_{1} \cdot \HH = 1, \ \ A_{1} \cdot \Delta = 0.$$ The last two of these equalities are clear. The first one may be computed using the identity $A_{1} \cdot S = 0$. Curves in the class $A_{1}$ cover $\ccc[(2,1)^*]$, so $\ccc[(2,1)^*]$ is in the stable base locus of $D$ if $5a + b <0$. Therefore, in the domain bounded by $\ddeg, S$ and $\Delta$ union $c(\ddeg \Delta)$, the stable base locus is $\ccc[(2,1)^*]$ union the boundary divisor.
\smallskip

The curve classes $B_6$ and $B_7$ introduced during the proof of Lemma \ref{disc-F} show that $\ccc[(1)^*]$ and $\qqq((1)^*)\lll$ are in the stable base locus of $D$ if $c<0$. Since the stable base locus of $F$ is equal to the union of these two loci,  in the region bounded by $F, \Hh$ and $\HH$ union $c(\Hh\overline{F}) \cup c(\HH \overline{F})$, the stable base locus equals $\ccc[(1)^*] \cup \qqq((1)^*)\lll$. Let $A_2$ be the curve class induced in $\Kgnb{0,0}(G(2,n),3)$ by taking the cone over a pencil of twisted cubic curves in a fixed quadric surface in $\PP^3$.   Since $$A_2 \cdot \Hh = 0, \ \ A_2 \cdot \HH = 2, \ \ A_2 \cdot \Delta = 4,$$ $\ccc[(3)^*]$ is in the stable base locus of $D$ if $b<-2c$. The curve class $B_{15}$ introduced in the proof of Theorem \ref{de-three} shows that $\ccc[(2,1)^*]$ is in the stable base locus of $D$ if $a+b+4c<0$. Similarly, the loci $\qqq((1,1)^*)\lll$ and $\qqq((2)^*)\lll$ are in the base locus of $D$ if $a<-2c$ and $b<-2c$, respectively. We conclude that in the domain bounded by $\dunb, U'$ and $\Hh$ union $c(U' \overline{D}_{unb}) \cup c(\Hh \overline{D}_{unb})$, the stable base locus is $\ccc[(3)^*] \cup \qqq((2)^*)\lll$. In the domain bounded by $\dunb, P$ and $\ddeg$ union $c(\dunb \ddeg)$, the stable base locus is $\ccc[(3)^*] \cup \ccc[(2,1)^*] \cup \qqq((2)^*)\lll \cup \qqq((1,1)^*)\lll$. In the domain bounded by $\ddeg, P$ and $S$ union $c(P\overline{D}_{deg}) \cup c(S \overline{D}_{deg})$, the stable base locus is $\ccc[(2,1)^*] \cup \qqq((1,1)^*)\lll$.
\smallskip

The curve classes $B_4$ and $B_5$ introduced in the proof of Lemma \ref{disc-P} show that $\ccc[(1,1)^*]$ and $\ccc[(2)^*]$, respectively, are in the stable base locus of $D$ if $a+5c<0$ and $b+5c<0$, respectively. Hence, in the domain bounded by $P, U', F$ and $P$ union $c(U' \overline{P}) \cup c(U \overline{P})$, the stable base locus is $\ccc[(1,1)^*]\cup \ccc[(2)^*] \cup \qqq((1)^*)\lll$. The stable base locus of a divisor contained in the domain bounded by $\dunb, P$ and $U'$ union $c(\dunb P)$ is a subset of the union of the stable base loci of $\dunb$ and $P$. Therefore, in this region the stable base locus is $\ccc[(3)^*] \cup \ccc[(1,1)^*] \cup \qqq((2)^*)\lll$. Similarly, in the domain bounded by $P,S$ and $U$ union $c(PS)$, the stable base locus is $\ccc[(2)^*] \cup \ccc[(1,1)^*] \cup \qqq((1,1)^*)\lll$.
\smallskip

The stable base locus of $U$ (resp., $U'$) is contained in the intersection of the stable base loci of $S$ and $P$ (resp., $\dunb$ and $P$). Moreover, in the domain bounded by $U, \HH$ and $F$ union $c(F\overline{U}) \cup c(\HH \overline{U})$  (resp., $U', \Hh$ and $F$ union $c(F\overline{U}') \cup c(\Hh\overline{U}')$) the stable base locus is contained in the stable base locus of $U$ (resp., $U'$).
It follows that the stable base loci are $\ccc[(1,1)^*]\cup \qqq((1)^*)\lll$ and $\ccc[(2)^*] \cup \qqq((1)^*)\lll$, respectively. Finally, in the domain bounded by $S, U$ and $\HH$ union $c(U\overline{S}) \cup c(\HH \overline{S})$, the stable base locus is contained in that of $S$. Hence, in this region the stable base locus is $\ccc[(1,1)^*] \cup \qqq((1,1)^*)\lll$.  This concludes the proof of the theorem.
\smallskip

\end{proof}

\begin{remark}
The description of the models is analogous to the case of $\Kgnb{0,0}(G(k,n),3)$ with $k\geq 3$ described in Remark \ref{remark-cube}. We leave the necessary modifications to the reader.
\end{remark}

\smallskip

\bigskip

\end{document}